\newtheorem{thm}{Theorem}[section]
\newtheorem{cor}[thm]{Corollary}
\newtheorem{lem}[thm]{Lemma}
\theoremstyle{definition}
\newtheorem{defn}[thm]{Definition}
\theoremstyle{remark}
\newtheorem{rem}[thm]{Remark}
\theoremstyle{example}
\newtheorem{exm}[thm]{Example}
\numberwithin{equation}{section}
\def\de{\end{equation}}
\def\edar{\end{eqnarray}}
\def\l{\left}\def\r{\right}
\def\lan{\langle}\def\ran{\rangle}
\def\[{\l[} \def\]{\r]}
\def\({\l(} \def\){\r)}
\def\beqlb{\begin{eqnarray}}\def\eeqlb{\end{eqnarray}}
\def\beqnn{\begin{eqnarray*}}\def\eeqnn{\end{eqnarray*}}
\def\d{{\mbox{\rm d}}}
\def\e{{\mbox{\rm e}}}
\title{\bf  Quasi-stationary distributions for time-changed symmetric $\alpha$-stable processes killed upon hitting zero}
\author{
  Zhe-Kang Fang \thanks{Laboratory of Mathematics and Complex Systems(Ministry of Education), School of Mathematical Sciences, Beijing Normal University, Beijing 100875, P.R. China.} 
  \and Yong-Hua Mao \footnotemark[1]
  \and  Tao Wang \footnotemark[1]
}
\date{}
\begin{document}

\maketitle


\begin{abstract}
For a time-changed symmetric $\alpha$-stable process killed upon hitting zero, under the condition of entrance from infinity, we prove the existence and uniqueness of quasi-stationary distribution (QSD). The exponential convergence to the QSD from  any initial distribution is proved under conditions on transition densities.
\end{abstract}

{\bf Keywords and phrases:} Quasi-stationary distribution; stable process; time-change; entrance at infinity; ground state.

{\bf Mathematics Subject classification(2020):} 60G52  60F99


\section{Introduction and main results}

\ \ \ \ Quasi-stationary distribution (QSD in short) is a good measurement to describe the long-time behavior of the  absorbing Markov process  when the process is conditioned to survive. Many efforts were
made to study the existence, uniqueness, the domains of attraction of  QSDs and the
convergence rate to a QSD for various Markov processes, cf. \cite{CMS13,GM15,GMZ22} for Markov chains, \cite{CCLMMS09,CMS13,LJ12} for diffusion processes, and  \cite{GNW23, HYZ19,MV12, V19} for general Markov processes under some additional conditions. 

In this paper, we will study QSDs for time-changed symmetric stable processes killed upon hitting zero.
Let $X:=(X_t)_{t\geqslant 0}$ be a symmetric $\alpha$-stable process on $\mathbb{R}$ with generator $\Delta^{\alpha / 2}:=-(-\Delta)^{\alpha / 2}$, $\alpha\in (1,2)$, where $-(-\Delta)^{\alpha / 2}$ is the fractional Laplacian. Consider the following stochastic differential equation:
\begin{equation}\label{SDE}
\mathrm{d} Y_{t}=\sigma\left(Y_{t-}\right) \mathrm{d} X_{t},
\end{equation}
where $\sigma$ is a strictly positive continuous function on $\mathbb{R}$. By \cite[Proposition 2.1]{DK20}, there is a unique weak solution $Y=(Y_t)_{t\geqslant 0}$ to the SDE \eqref{SDE}, and $Y$ can also be expressed as a  time-changed process $Y_{t}:= X_{\zeta_{t}},$ {where}  $$\zeta_{t}:=\inf \left\{s > 0: \int_{0}^{s} \sigma\left(X_{u}\right)^{-\alpha} \mathrm{~d} u>t\right\}. 
$$
By \cite[Remark 43.12]{Sa99}, the process is pointwise recurrent. Suppose $\mu(\mathrm{d} x):=\sigma(x)^{-\alpha} \mathrm{d} x$ is a probability measure.

Let $T_0=\inf\{t>0: Y_t = 0\}$, $T_\infty =\lim_{R\rightarrow +\infty} T_{(-R,R)^c}$, and $T_A=\inf\{t>0, Y_t\in A\}$, for any Borel subset $A \subseteq \mathbb{R}$. According to \cite[Theorem 2.3]{KA18} and the strong Markov property, a direct calculation (see Appendix for more details) leads to the conclusion that for any $x \neq 0$, $\mathbb{P}_x[T_0<T_\infty]=1$, which means the processes are almost-surely absorbed by $0$.  Let $Y^0$ be the (sub-)process of  $Y$ killed upon 0, with transition function
\begin{equation*}
	P_t^0 (x,A) = \mathbb{P}_x[Y_t\in A, t<T_{0}]\text{, for any }x\in \mathbb{R}^0:=\mathbb{R}\setminus\{0\}   \text{ and }A\in \mathscr{B}(\mathbb{R}^0). 
\end{equation*}

We call a probability measure $\nu$ is a QSD for $Y^{0}$ if for any $t\geq 0$ and any $A \in \mathscr{B}(\mathbb{R}^0) $,
\begin{equation*}
\mathbb{P}_\nu [Y_t\in A|t< T_0]=\frac{\mathbb{P}_\nu[Y_t\in A,t<T_0]}{\mathbb{P}_\nu[t<T_0]}=\nu(A),
\end{equation*}
where $\mathbb{P}_\nu(\cdot)=\int_{\mathbb{R}^0} \mathbb{P}_x(\cdot) \nu(\d x)$.

D\"oring and Kyprianou \cite{DK20} studied the entrance and exit from infinity for this process and Wang \cite{W21+} studied the exponential and strong ergodicity. The main purpose of this paper is to study the QSD for time-changed $\alpha$-stable processes killed upon zero. We will consider the existence and uniqueness of QSD, Yaglom limit, domain of attraction and the speed of convergence to the QSD.

In this paper, we will assume the following condition always holds:
\begin{equation}\label{I}
I^{\sigma,\alpha}:=\int_{\mathbb{R}} \sigma(x)^{-\alpha}|x|^{\alpha-1} \d x <\infty.
\end{equation}
By \cite[Theorem 1.4]{W21+}, \eqref{I} is equivalent to strong ergodicity  for $Y$; meanwhile, according to \cite[Table 2]{DK20}, \eqref{I} holds if and only if $\pm \infty$ are {\bf entrance from infinity}.

Denote by $L^2(\mathbb{R}^0,\mu)$ the space of square integrable measurable functions on $\mathbb{R}^0$ with respect to $\mu$. We first present the result on compactness for killed transition semigroup $(P_t^0)_{t>0}$ under the above condition.
\begin{thm}\label{compact}
	If $I^{\sigma,\alpha}<\infty$, then $(P_t^0)_{t> 0}$ is compact on $L^2(\mathbb{R}^0; \mu)$. 
\end{thm}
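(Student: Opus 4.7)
The plan is to prove compactness of $(P_t^0)_{t>0}$ on $L^2(\R^0;\mu)$ by showing that the Dirichlet form domain of $Y^0$ is compactly embedded in $L^2(\R^0;\mu)$; compactness of each $P_t^0$ then follows from self-adjoint spectral theory, since the generator will have compact resolvent and hence discrete spectrum, so that every $P_t^0=e^{tL^0}$ is compact. The first step is to identify the form: because $Y$ is the time change of the symmetric $\alpha$-stable process $X$ by an additive functional whose Revuz measure $\mu$ has full support (as $\sigma$ is strictly positive and continuous), time-change theory for Dirichlet forms yields the Dirichlet form of $Y$ on $L^2(\R;\mu)$ as
\[
\mathcal{E}(u,u)=\frac{c_\alpha}{2}\int_\R\!\int_\R\frac{(u(x)-u(y))^2}{|x-y|^{1+\alpha}}\,dx\,dy,\qquad \mathcal{F}=H^{\alpha/2}(\R)\cap L^2(\mu).
\]
Since $\alpha>1$ makes singletons non-polar, the form of the killed process $Y^0$ is $(\mathcal{E},\mathcal{F}^0)$ with $\mathcal{F}^0=\{u\in\mathcal{F}:\tilde u(0)=0\}$, where $\tilde u$ denotes the quasi-continuous version.

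The heart of the argument is a pointwise estimate. Because $\alpha/2>1/2$, the homogeneous fractional Sobolev--Morrey embedding $\dot H^{\alpha/2}(\R)\hookrightarrow \dot C^{(\alpha-1)/2}(\R)$ yields, using only the seminorm, a constant $C_\alpha>0$ with
\[
|u(x)-u(y)|^2\leq C_\alpha\,|x-y|^{\alpha-1}\,\mathcal{E}(u,u) \qquad (u\in\mathcal{F}).
\]
Setting $y=0$ for $u\in\mathcal{F}^0$ delivers the crucial bound $|u(x)|^2\leq C_\alpha|x|^{\alpha-1}\mathcal{E}(u,u)$, which converts the analytic hypothesis \eqref{I} into a uniform tail estimate:
\[
\int_{|x|>R}|u(x)|^2\,\mu(dx)\leq C_\alpha\,\mathcal{E}(u,u)\int_{|x|>R}|x|^{\alpha-1}\sigma(x)^{-\alpha}\,dx \longrightarrow 0 \quad\text{as } R\to\infty,
\]
uniformly over any $\mathcal{E}_1$-bounded family. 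On a bounded interval $[-R,R]$, the measure $\mu$ is comparable to Lebesgue (since $\sigma$ is continuous and positive), so the fractional Rellich--Kondrachov theorem for $H^{\alpha/2}([-R,R])$ provides local $L^2(\mu)$-precompactness. A standard diagonal extraction combining the uniform tail control with this local compactness then produces the compact embedding $(\mathcal{F}^0,\mathcal{E}_1)\hookrightarrow L^2(\R^0;\mu)$, completing the proof.

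The principal obstacle is precisely the pointwise estimate, as it must control $u$ at every point purely by its energy $\mathcal{E}(u,u)$, with no $L^2$ contribution; this is what turns \eqref{I} into a quantitative tail bound. The inequality is available exactly because $\alpha>1$ pushes the Sobolev index $\alpha/2$ strictly above the one-dimensional critical threshold $1/2$, which is of course the same condition underlying entrance from infinity via \cite{DK20}. Once the pointwise bound is in hand, the subsequent passage through tail control, local fractional Rellich--Kondrachov and the self-adjoint spectral calculus is standard.
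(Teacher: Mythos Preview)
Your argument is correct, but it proceeds along a genuinely different route from the paper. The paper works directly at the operator level: it exploits the explicit Green function estimate $G_X^0(x,y)\leq \omega_\alpha(|x|^{\alpha-1}\wedge|y|^{\alpha-1})$ from \eqref{esti for G0} to show, in one line, that the Green kernel lies in $L^2(\R^0\times\R^0;\mu\times\mu)$ and hence $G^0$ is Hilbert--Schmidt; together with a Riesz--Thorin argument that $0\in\rho(\mathcal{L}^0)$, compactness of $(P_t^0)_{t>0}$ then follows from a cited criterion. Your approach instead works at the form level: the Morrey inequality $|u(x)|^2\leq C_\alpha|x|^{\alpha-1}\mathcal{E}(u,u)$ for $u\in\mathcal{F}^0$ is the variational dual of the paper's Green bound, and you convert \eqref{I} into a uniform tail estimate to obtain a compact embedding of the form domain. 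The paper's path is shorter and yields the stronger Hilbert--Schmidt conclusion, but depends on the explicit kernel formula \eqref{G_X^0}; your path is more conceptual and transportable to settings without closed-form Green functions, at the cost of invoking more machinery (Morrey, Rellich--Kondrachov, compact-embedding spectral theory). One small remark: once you have the H\"older bound from Morrey, Arzel\`a--Ascoli already gives local precompactness in $C([-R,R])$ and hence in $L^2$, so the appeal to fractional Rellich--Kondrachov is not strictly necessary.
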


Under entrance from infinity \eqref{I}, by Theorem \ref{compact}, there exists a complete orthonormal set of eigenfunctions $\{ \psi_n \}_{n\geq 0}$ with $\|\psi_n\|_{L^2(\mathbb{R}^0;\mu)}=1$, such that $ P_t^0 \psi_n=e^{-\lambda_n t} \psi_n $ for any $n\geq 0$ and $t\geq 0$, where $\{\lambda_n\}_{n\geq 0}$ are eigenvalues of generator of $(P_t^0)_{t\geq 0}$, such that $0<\lambda_0< \lambda_1\leq \dots \rightarrow +\infty$,  where the positivity and simplicity of $\lambda_0$ will be proved in Appendix, Proposition A.2. The principal eigenfunction $\psi_0$ is called the {\bf ground state}.

By using the ground state $\psi_0$, we can state our results on QSDs.

\begin{thm}\label{do<->ent}
	If $I^{\sigma,\alpha} <\infty$, then $Y^0$ has a unique QSD:
	  \begin{equation}\label{nu}
	 		\nu(A)=\frac{\int_A \psi_0 \  \d \mu}{\int_{\mathbb{R} ^0} \psi_0 \ \d \mu },\ A\in \mathscr{B}(\mathbb{R}^0).
	 \end{equation}
	 Furthermore, $\nu$ is a Yaglom limit of $Y^0$, that is
	 for any $x\in \mathbb{R}^0$ and any subset $A\in\mathscr{B}\left(\mathbb{R}^0\right) $,
	 \begin{equation*}
	 \lim_{t \rightarrow \infty} \mathbb{P}_x[Y_t\in A|t<T_0]=\nu(A).
	 \end{equation*}
	 Moreover, there exists $0<C(x)<\infty$, such that
	 \begin{equation}\label{Yaglom-exp}
	 \Vert \mathbb{P}_x[Y_t \in \cdot | t<T_0] -\nu \Vert_{TV} \leq C(x)e^{-(\lambda_1-\lambda_0)t},
	 \end{equation}
	 where $\Vert \eta \Vert_{TV}:= \sup_{|f|\leq 1} |\eta(f)| $ denotes the total variation of a signed measure $\eta$. 
	
\end{thm}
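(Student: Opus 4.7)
The plan is to combine Theorem~\ref{compact} with the $\mu$-symmetry of $Y$ (the underlying $\alpha$-stable process $X$ is symmetric with respect to Lebesgue measure, and the time change by $\int_0^s\sigma(X_u)^{-\alpha}du$ upgrades this to $\mu$-symmetry of $Y$), so that $(P_t^0)_{t>0}$ is a compact, self-adjoint semigroup on $L^2(\mathbb{R}^0,\mu)$ with spectral expansion
\[
P_t^0 f \;=\; \sum_{n\ge 0} e^{-\lambda_n t}\,\langle f,\psi_n\rangle_\mu\,\psi_n,\qquad f\in L^2(\mathbb{R}^0,\mu).
\]
Since $\mu$ is a probability measure, $L^2\subset L^1$, so the positive ground state $\psi_0$ (Proposition A.2) is integrable and the measure $\nu$ in \eqref{nu} is well-defined. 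For the QSD property I would test $\nu P_t^0$ against $1_A$: self-adjointness gives
\[
\nu P_t^0(A)
=\frac{\langle P_t^0 1_A,\psi_0\rangle_\mu}{\langle 1,\psi_0\rangle_\mu}
=\frac{\langle 1_A,P_t^0\psi_0\rangle_\mu}{\langle 1,\psi_0\rangle_\mu}
=e^{-\lambda_0 t}\,\nu(A),
\]
proving $\nu$ is a QSD with absorption rate $\lambda_0$.

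For the Yaglom limit and the rate \eqref{Yaglom-exp} I would use the transition density $p_t^0(x,y)$ of $Y^0$ with respect to $\mu$ (available since the heat kernel of the $\alpha$-stable process is continuous and killing at a polar-complement point preserves continuity), so that $p_t^0(x,y)=\sum_n e^{-\lambda_n t}\psi_n(x)\psi_n(y)$ pointwise. For any bounded $f$ with $|f|\leq 1$,
\[
P_t^0 f(x)-\nu(f)\,P_t^0 1(x)
=\sum_{n\ge 1} e^{-\lambda_n t}\,\psi_n(x)\,\langle f-\nu(f),\psi_n\rangle_\mu
=:N_t(x,f),
\]
because the $n=0$ contribution cancels by the definition of $\nu$. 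Splitting $e^{-\lambda_n t}=e^{-\lambda_n}e^{-\lambda_n(t-1)}$ and applying Cauchy--Schwarz together with Parseval yields the key bound
\[
|N_t(x,f)|^2
\leq e^{-2\lambda_1 (t-1)}\!\!\sum_{n\ge 1} e^{-2\lambda_n}\psi_n(x)^2\cdot\|f-\nu(f)\|_{L^2(\mu)}^2
\leq 4\,e^{-2\lambda_1(t-1)}\,p_2^0(x,x).
\]
A parallel estimate for $R_t(x,\mathbb{R}^0)=P_t^0 1(x)-e^{-\lambda_0 t}\psi_0(x)\langle 1,\psi_0\rangle_\mu$ shows $P_t^0 1(x)\geq \tfrac12 e^{-\lambda_0 t}\psi_0(x)\langle 1,\psi_0\rangle_\mu$ for $t$ large. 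Dividing and taking the supremum over $|f|\leq 1$ gives \eqref{Yaglom-exp} with $C(x)=c\,\sqrt{p_2^0(x,x)}/\psi_0(x)$, and in particular the Yaglom limit.

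For uniqueness, I would take an arbitrary QSD $\nu'$ with absorption rate $\theta>0$, so $\nu' P_t^0=e^{-\theta t}\nu'$. The kernel representation together with Fubini forces $\nu'=h\,\mu$ with $h(y)=e^{\theta t}\!\int p_t^0(x,y)\nu'(dx)\ge 0$; the Hilbert--Schmidt property implicit in Theorem~\ref{compact} (equivalently, $p_2^0(x,x)\in L^1(\mu)$) then places $h$ in $L^2(\mathbb{R}^0,\mu)$. Self-adjointness converts $\nu'P_t^0=e^{-\theta t}\nu'$ into $P_t^0 h=e^{-\theta t}h$, so $h$ is a nonnegative eigenfunction. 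Since for $n\ge 1$ each $\psi_n$ is orthogonal to the strictly positive $\psi_0$ and therefore changes sign, $h$ must lie in the one-dimensional eigenspace of $\lambda_0$; normalization gives $\nu'=\nu$ and $\theta=\lambda_0$.

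The main obstacle is the control of the killed heat kernel: everything hinges on the existence of a (continuous) transition density $p_t^0$ and on the finiteness of $p_2^0(x,x)$ for each $x$, so that both the pointwise spectral expansion and the Cauchy--Schwarz remainder bound are legitimate. Once these are in place---via standard heat-kernel estimates for the symmetric $\alpha$-stable process combined with the fact that killing upon hitting a singleton preserves continuity of the density on $\mathbb{R}^0\times\mathbb{R}^0$---the rest of the argument is spectral bookkeeping.
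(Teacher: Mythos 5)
Your construction of $\nu$, the verification of the QSD property via $\mu$-symmetry and the eigenfunction equation, and the spectral/Cauchy--Schwarz derivation of the Yaglom limit with rate $e^{-(\lambda_1-\lambda_0)t}$ all match the paper's argument essentially step for step (the paper writes the remainder as $\langle P_{t-1}^0\mathbf{1}_A,p_1^0(x,\cdot)\rangle$ and also relies on $p_2^0(x,x)<\infty$ for each fixed $x$, so the ``main obstacle'' you flag is shared and is handled there by citing the existence and symmetry of the density).

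The uniqueness argument, however, has a genuine gap. From $\nu'P_t^0=e^{-\theta t}\nu'$ and Fubini you correctly get $\nu'=h\mu$ with $h(y)=e^{\theta t}\int p_t^0(x,y)\,\nu'(\d x)\ge 0$ and $h\in L^1(\mu)$, but the claim that the Hilbert--Schmidt property of the semigroup places $h$ in $L^2(\mu)$ does not follow. Estimating $\|h\|_{L^2(\mu)}^2$ by Cauchy--Schwarz in $\nu'$ gives $\|h\|_{L^2(\mu)}^2\le e^{2\theta t}\int p_{2t}^0(x,x)\,\nu'(\d x)$, which is controlled by the Hilbert--Schmidt property only when $\nu'$ is replaced by $\mu$ (i.e.\ $p_{2t}^0(x,x)\in L^1(\mu)$); for an arbitrary QSD $\nu'$ one would need something like $\sup_x p_{2t}^0(x,x)<\infty$, which is precisely the \emph{additional} hypothesis of Theorem \ref{do_con} and is not available under $I^{\sigma,\alpha}<\infty$ alone (cf.\ Example \ref{eg}, where it must be verified separately). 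Without $h\in L^2(\mu)$ you cannot invoke self-adjointness to turn $\nu'P_t^0=e^{-\theta t}\nu'$ into $P_t^0h=e^{-\theta t}h$ in $L^2$, nor appeal to orthogonality with $\psi_0$; an $L^1$-eigenfunction of a compact symmetric semigroup need not be square-integrable. This is exactly the point where the paper takes a different route: it performs the Doob $h$-transform $P_t^{\psi_0}f=e^{\lambda_0 t}P_t^0(\psi_0 f)/\psi_0$, proves irreducibility and conservativity of the transformed process, and applies a ratio limit theorem to the quotient $\mathbb{P}_\eta[Y_t\in K,t<T_0]/\mathbb{P}_\eta[Y_t\in F,t<T_0]$ to conclude $\eta(K)\le\int_K\psi_0\,\d\mu/\int_F\psi_0\,\d\mu$, hence $\eta\le\nu$ and so $\eta=\nu$. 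You should either adopt that argument or supply an independent proof that every QSD has an $L^2(\mu)$ density under the standing hypothesis only.
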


Next, under some additional assumptions of transition density functions, we can prove  $\nu$ attracts all probability measures $\eta$ on $\mathbb{R}^0$, and the exponential convergence in total variation.

\begin{thm}\label{do_con}
	 Assume $I^{\sigma,\alpha}<\infty$. Let $$p_t^0(x,y):=\frac{\d P_t^0(x,\cdot)}{\d \mu}(y).$$
	If for any $r_0>0$,   $p_t^0(x,y)$  satisfies that $\sup_{x\in [-r_0,r_0]\setminus\{0\}} p_2^0(x,x) <+\infty$, then $\nu$ attracts all probability measures $\eta$ on $\mathbb{R}^0$, that is, for any subset $A\in\mathscr{B}(\mathbb{R}^0)  $,
	\begin{equation}\label{con_ordin}
	\lim_{t \rightarrow \infty} \mathbb{P}_\eta[Y_t\in A | t<T_0]=\nu(A).
	\end{equation}
	Furthermore, if $\sup_x p_2^0(x,x) <+\infty$, then for any probability measure $\eta$ on $\mathbb{R}^0$, 
	\begin{equation}\label{con_eta}
		\Vert \mathbb{P}_\eta [Y_t \in \cdot | t<T_0] -\nu(\cdot) \Vert_{TV}\leq \frac{C e^{-(\lambda_1-\lambda_0)t}\Vert\eta-\nu\Vert_{TV}}{\eta(\psi_0)},
	\end{equation}
	where $C$ is a constant independent of $\eta$.
\end{thm}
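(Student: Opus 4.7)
My plan is to combine the spectral decomposition from Theorem \ref{compact} with two cancellation mechanisms --- the identity $\int_A \psi_0\,\d\mu = \nu(A)\int \psi_0\,\d\mu$ built into the definition of $\nu$, and the QSD identities $\mathbb{P}_\nu[Y_t\in A, t<T_0] = e^{-\lambda_0 t}\nu(A)$, $\mathbb{P}_\nu[t<T_0] = e^{-\lambda_0 t}$ --- to reduce both convergence statements to a single Mercer-type estimate on $p_t^0(x,y)-e^{-\lambda_0 t}\psi_0(x)\psi_0(y)$.

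The basic estimate comes from expanding $p_t^0(x,y) = \sum_{n\geq 0} e^{-\lambda_n t}\psi_n(x)\psi_n(y)$, factoring $e^{-\lambda_1(t-2)}$ out of the $n\geq 1$ tail, and applying Cauchy--Schwarz together with the trace identity $p_2^0(z,z) = \sum_n e^{-2\lambda_n}\psi_n(z)^2$. For $t\geq 2$ this yields
\[
|p_t^0(x,y) - e^{-\lambda_0 t}\psi_0(x)\psi_0(y)| \leq e^{-\lambda_1(t-2)}\sqrt{p_2^0(x,x)\,p_2^0(y,y)}.
\]
Setting $Q(x,t,A) := P_t^0\mathbf{1}_A(x) - \nu(A)P_t^0\mathbf{1}(x)$, inserting this expansion in the $y$-integral defining $Q$ makes the leading $e^{-\lambda_0 t}$-term vanish. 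Integrating $Q$ against $\nu$ also gives zero thanks to the QSD identities, so
\[
\mathbb{P}_\eta[Y_t\in A, t<T_0] - \nu(A)\mathbb{P}_\eta[t<T_0] = \int Q(x,t,A)\,\d(\eta-\nu)(x).
\]

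For \eqref{con_eta}, the assumption $\sup_x p_2^0(x,x)<\infty$ combined with $\int \sqrt{p_2^0(y,y)}\,\d\mu(y) < \infty$ (which follows from $\int p_2^0(y,y)\,\d\mu = \sum e^{-2\lambda_n}<\infty$, Cauchy--Schwarz, and $\mu$ being a probability measure) yields $\sup_{x,A}|Q(x,t,A)| \leq C e^{-\lambda_1 t}$. Moreover $\psi_0(x)^2 \leq e^{2\lambda_0}p_2^0(x,x)$, so $\psi_0$ is bounded; together with $P_t^0\psi_0 = e^{-\lambda_0 t}\psi_0$ this produces the lower bound $\mathbb{P}_\eta[t<T_0] \geq e^{-\lambda_0 t}\eta(\psi_0)/\|\psi_0\|_\infty$. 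Dividing and taking the supremum over $A$ gives \eqref{con_eta}.

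For \eqref{con_ordin}, only the local bound on $p_2^0(x,x)$ is available, so $\sup_x|Q(x,t,A)|$ may be infinite and the argument above breaks. I will first establish \eqref{con_ordin} for $\eta$ supported on a compact $K = [-r_0,r_0]\setminus\{0\}$: on such $K$ one has $\sup_{x\in K} p_2^0(x,x)<\infty$, so the pointwise Mercer bound applied inside $\int Q\,\d\eta$ (using $\int_K Q\,\d\nu$ is controlled via $\int Q\,\d\nu = 0$) gives convergence, without the $\|\eta-\nu\|_{TV}$ refinement. For a general probability measure $\eta$, reduce to the compact-support case by the Markov property at a fixed time $s>0$, writing $\mathbb{P}_\eta[Y_t\in A|t<T_0] = \mathbb{P}_{\eta_s}[Y_{t-s}\in A|(t-s)<T_0]$ with $\eta_s := \mathbb{P}_\eta[Y_s\in\cdot|s<T_0]$, and approximating $\eta_s$ by its restriction to increasing compacts $K_n$. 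The main obstacle is precisely this reduction: one must show that $\eta_s([-r_0,r_0]^c)$ can be made small uniformly in $t-s$ for $r_0$ large, which requires entrance from infinity \eqref{I} (forcing the conditioned process to enter compacts rapidly) combined with the smoothing of $P_s^0$ (which controls the density of $\eta P_s^0$ locally on $\mathbb{R}^0$).
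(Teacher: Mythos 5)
Your treatment of the second assertion \eqref{con_eta} is essentially sound and follows the same spectral route as the paper: the estimate $|p_t^0(x,y)-e^{-\lambda_0 t}\psi_0(x)\psi_0(y)|\leq e^{-\lambda_1(t-2)}\sqrt{p_2^0(x,x)p_2^0(y,y)}$ is the paper's \eqref{lim_t3}/\eqref{con_1} in pointwise form, the cancellation $\int Q\,\d\nu=0$ plays the role of the paper's reduction to the signed measure $\tilde\eta=\eta-\nu$, and your lower bound $\mathbb{P}_\eta[t<T_0]\geq e^{-\lambda_0 t}\eta(\psi_0)/\|\psi_0\|_\infty$ is a clean way to handle the denominator (arguably tidier than the paper's two-term splitting \eqref{con_4}). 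The only caveat is that the pointwise Mercer expansion of $p_t^0$ needs justification; the paper avoids this by working in $L^2$ with $\|p_1^0(x,\cdot)\|_{L^2(\mu)}^2=p_2^0(x,x)$ and Chapman--Kolmogorov, and you should do the same (your integrated bounds survive this substitution unchanged).

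For the first assertion \eqref{con_ordin}, however, there is a genuine gap. You correctly identify that the obstruction is the mass of $\eta$ (or of $\eta_s$) far from the origin, but you do not supply the estimate that controls it, and the compact-approximation scheme you sketch cannot close without it: replacing $\eta_s$ by $\eta_s|_{K_n}$ produces an error governed by the ratio $\mathbb{P}_{\eta_s|_{K_n^c}}[u<T_0]/\mathbb{P}_{\eta_s}[u<T_0]$, and making $\eta_s(K_n^c)$ small does not make this ratio small uniformly in $u$ unless one already knows that $\mathbb{P}_x[T_0>u]$ decays at the rate $e^{-\lambda_0 u}$ \emph{uniformly in} $x$. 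That uniform bound, $\sup_{x\in\mathbb{R}^0}\sup_{t>2}e^{\lambda_0 t}\mathbb{P}_x[T_0>t]<\infty$, is exactly the missing ingredient, and it is where entrance from infinity must actually be used quantitatively. The paper obtains it by taking $\lambda=\lambda_0$ in Lemma \ref{bou_lem} to get $B_1:=\sup_x\mathbb{E}_x[e^{\lambda_0 T_{[-R_0,R_0]}}]<\infty$, bounding $B_2:=\sup_{t>2}\sup_{x\in[-R_0,R_0]}e^{\lambda_0 t}\mathbb{P}_x[T_0>t]$ via \eqref{lim_t3} and the local hypothesis on $p_2^0(x,x)$, and then splicing the two with the strong Markov property at $T_{[-R_0,R_0]}$ to get $\mathbb{P}_x[T_0>t]\leq e^{-\lambda_0 t}B_1(B_2+1)$ for all $x$. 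Once this is in hand, no compact approximation is needed at all: one applies dominated convergence directly to $\int e^{\lambda_0 t}\mathbb{P}_x[Y_t\in A,t<T_0]\,\eta(\d x)$ together with the pointwise limit \eqref{lim_t1}. You should either prove the uniform survival estimate along these lines or find a substitute; as written, the reduction step is asserted rather than proved.
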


\begin{rem}
	 The Lyapunov function condition is also an important method for proving the existence and uniqueness of QSD, and we refer the reader to \cite[Theorem 2.2]{GNW23} for the recent result. Note that the Lyapunov function condition \cite[C3]{GNW23} implies 
	$$\lim_{n\rightarrow \infty} \sup_{x\in \mathbb{R}} \mathbb{E}_x[T_{[-n,n]}]=0,$$
	which is equivalent to entrance from infinity by Lemma \ref{equ_con} in this paper. 
\end{rem}

Note that when $\sigma$ is a polynomial, the conditions in above theorem can be explicitly characterized, and we obtain the following conclusion. 

\begin{exm}\label{eg}
	Consider the polynomial case: $\sigma(x)=\left(\frac{2}{\alpha\gamma-1}\right)^{1/\alpha}(1+|x|)^{\gamma}$ (where $\left(\frac{2}{\alpha\gamma-1}\right)^{1/\alpha}$ is the normalizing constant so that the $\mu$ is a probability measure). When $\gamma>1$, $Y^0$ has a unique QSD $\nu$ given by \eqref{nu},  $\nu$ is the Yaglom limit of $Y^0$ and \eqref{Yaglom-exp} holds. Furthermore, \ref{con_eta} holds if and only if $\gamma>1$. 
\end{exm}

An outline of this paper is as follows. In Section 2, we recall some basic notions and properties about killed processes and Green functions. In section 3,  we prove the compactness of the semigroup of killed processes (Theorem \ref{compact}), and the strict positivity, continuity and boundedness of ground state, which plays an important role in subsequent proofs. In section 4, we prove our main results on QSDs (Theorem \ref{do<->ent}, Theorem \ref{do_con} and Example \ref{eg}), and give some corollaries.

\section{{Killed processes and Green functions}}

Recall that $X$ is a one-dimensional symmetric $\alpha$-stable process with $\alpha\in(1,2)$, and $Y$ is its time-changed process.   According to \cite[Theorem 6.2.1]{FOT11}, it is known that $Y$ is $\mu$-symmetric. 
From \cite[Section 1]{CW14}, the Dirichlet form $(\mathscr{E},\mathscr{F})$ associated with $Y$ is given by
\begin{equation*}
	\mathscr{E}(f,g)=\frac{1}{2}\int_{\mathbb{R}} \int_{\mathbb{R}} (f(x)-f(y))(g(x)-g(y)) \frac{C_\alpha \d x \d y}{|x-y|^{1+\alpha}},
\end{equation*}
and
\begin{equation*}
	\mathscr{F}=\{ f\in L^2(\mu): \mathscr{E}(f,f)<\infty  \},
\end{equation*}
{where $C_\alpha=\frac{\alpha2^{\alpha-1}\Gamma((\alpha+1)/2)}{\sqrt{\pi}\Gamma(1-{\alpha}/{2})}$.}

 Given an open subset $B \subseteq \mathbb{R}$, let $Y^B$ be the killed (sub-)process of $Y$ killed upon exiting $B$ with  transition function
\begin{equation*}
P_t^B (x,A) = \mathbb{P}_x[Y_t\in A, t<T_{B^c}]\text{, for any }x\in B   \text{ and }A\in \mathscr{B}(B). 
\end{equation*}
{Define
$$ \mathscr{F}^B=\{f\in \mathscr{F}, \widetilde{f}=0,\ \text{q.e. on}\ B^c \},\quad \mathscr{E}^B=\mathscr{E} \ \text{on}\  \mathscr{F}^B\times \mathscr{F}^B,$$
where q.e. stands for quasi-everywhere, 
and $\widetilde{f}$ is a quasi-continuous modification of $f$ 
(cf. \cite[Section 2.2]{O13}). By \cite[Theorem 3.5.7]{O13},  $(\mathscr{E}^B,\mathscr{F}^B)$ is a regular Dirichlet form on $L^2(B;\mu)$ associated with $Y^B$; $Y^B$ is symmetric with respect to the measure $\mu|_{B}(\d x)$ (where $L^2(B,\mu)$
be the space of square integrable measurable functions on 
$B$ with respect to $\mu$).
We write $(\mathcal{L}^B,\mathcal{D}(\mathcal{L}^{B}))$ for the infinitesimal generator of
$(P_t^B)$ in $L^2(B;\mu)$, with 
$$
\mathcal{D}\left(\mathcal{L}^{B}\right):=\left\{u \in L^2(B;\mu): \lim _{t \rightarrow 0} \frac{P_t^{B} u-u}{t} \text { exists in } L^2(B;\mu)\right\},
$$
and
\begin{equation*}
\mathcal{L}^B f=\lim _{t \rightarrow 0} \frac{P_t^{\mathrm{B}} f-f}{t} \quad \text { in } L^2(B;\mu),\quad  \text{for any} \ f \in \mathcal{D}(\mathcal{L}^{B}).
\end{equation*}

}

 Let
\begin{equation*}
G^B(x,\d y)=\int_{0}^{+\infty} P_t^B(x,\d y) dt
\end{equation*}
 be the Green potential measure and denote the Green operator by
$$G^Bf(x)=\int_{B}f(y)G^B(x,\d y).$$ Simultaneously, the killed process $X^B$ and its Green potential measure $G_X^B(x,\d y)$ are defined similarly. Let $G_X^B(x,y)$ be the Green function of $X^B$, that is, for any $x,y \in B$, $G_X^B(x,\d y)=G_X^B(x,y) \d y$. 
According to \cite[Section 2]{W21+}, it should be pointed out here that the Green operator of $Y^B$ has a strong relationship with the Green operator of $X^B$:
\begin{equation}\label{Y_X}
G^Bf(x)= \int_B f(y) G_X^B(x,y) \sigma(y)^{-\alpha} \d y, \ \forall f \in \mathscr{B}(\mathbb{R}). 
\end{equation}

On some occasions, the Green function of $X^B$ can be expressed explicitly, for example:
\begin{enumerate}[label=(\arabic*)]
	\item {\cite[Lemma 4]{bz06}} $B=\mathbb{R}^0$: for any $x,y \neq 0$,
	\begin{equation}\label{G_X^0}
	G_X^0(x,y):=G_X^{\{0\}^c}(x,y)=\frac{\omega_\alpha}{2} \left( |y|^{\alpha-1}+|x|^{\alpha-1}-|y-x|^{\alpha-1} \right),
	\end{equation}
	where $\omega_\alpha=-\frac{1}{\cos(\pi\alpha/2)\Gamma(\alpha)}.$
 By direct calculation, we have (see \cite[Page 9]{W21+})
\begin{equation}\label{esti for G0}
	G_X^0(x,y)\leq \omega_\alpha (|x|^{\alpha-1}\wedge |y|^{\alpha-1}).
\end{equation}	

	\item {\cite[(11)]{DKW20}} $B=\mathbb{R}\setminus\left[-1,1\right]^c$: for any $x,y \in \left[-1,1\right]^c$,
	\begin{equation}\label{G_X^1c}
	G_X^{\left[-1,1\right]^c}(x,y)=c_\alpha \left( |x-y|^{\alpha-1}h\left( \frac{|xy-1|}{|x-y|} \right) -(\alpha-1) h(x)h(y)      \right),  
	\end{equation}
	where $ c_\alpha=2^{1-\alpha}/\left({\Gamma(\alpha/2)^2}\right),  $ and 
	$$h(x)=\int_{1}^{|x|}\left( z^2-1 \right)^{\frac{\alpha}{2}-1}dz.$$
\end{enumerate}
	Furthermore, {according to \cite[Lemma 3.3]{DKW20},}
	\begin{equation}\label{G_X^1clim}
	\lim_{x \rightarrow \infty} G_X^{\left[-1,1\right]^c}(x,y)=K_\alpha h(y),
	\end{equation}
	where $K_\alpha$ is a constant and defined by
	\begin{equation*}
	K_\alpha=\frac{2c_\alpha (1-\frac{\alpha}{2}) \Gamma(\frac{\alpha}{2})}{\Gamma(1-\frac{\alpha}{2})} \int_{1}^{\infty} \frac{h'(v)}{1+v} \d v <\infty.
	\end{equation*}
Besides, thanks to the self-similarity of $X$, for any $R>0$,
\begin{equation}\label{sel_1}
G_X^{\left[-R,R\right]^c}(x,y)=R^{\alpha-1}G_X^{\left[-1,1\right]^c}(\frac{x}{R},\frac{y}{R}).
\end{equation}

\section{Compactness of killed semigroups and properties of the ground states}

Let $P_t^0:=P_t^{\{0\}^c}$ be the semigroup of $Y$ killed upon hitting 0.  Denote by $Y^0:=Y^{\{0\}^c}$, $\mathcal{L}^0:=\mathcal{L}^{\{0\}^c}$ and  $G^0:=G^{\{0\}^c}$ the killed process, the generator of $P_t^0$, and the Green operator on $\mathbb{R}^0$ respectively. In this section, we first prove that under the condition $I^{\sigma,\alpha}<\infty$, $(P_t^0)_{t> 0}$ is compact; therefore, we can study the properties of ground state, which is crucial to our proofs of main results on QSDs.

Firstly, we prove Theorem \ref{compact} and $G^0$ is a Hilbert-Schmidt operator on $L^2(\mathbb{R}^0;\mu)$ under the condition $I^{\sigma,\alpha}<\infty$.

\begin{proof}[Proof of Theorem \ref{compact}]
By \cite[Theorem 0.3.9]{wfy05}, to demonstrate $(P_t^0)_{t> 0}$ is compact, we  just need to prove $0$ is belong to resolvent set $\rho(\mathcal{L}^0)$ of $\mathcal{L}^0$ and $G^0$ is compact. 
	
Firstly, we prove $0\in\rho(\mathcal{L}^0)$,  that is, the inverse of $-\mathcal{L}^0$, $G^0=(-\mathcal{L}^0)^{-1}$ is bounded in the operator norm from $L^2(\mathbb{R}^0;\mu)$ to $L^2(\mathbb{R}^0;\mu)$: 
	$$\|G^0\|_{L^2(\mathbb{R}^0;\mu)\rightarrow L^2(\mathbb{R}^0;\mu)}<\infty.$$
Note that by \eqref{esti for G0},
\begin{equation*}
	\begin{split}
		\|G^0f\|_{L^1(\mathbb{R}^0;\mu)}&=\int_{\mathbb{R}^0}\int_{\mathbb{R}^0}G^0(x,y)f(y)\mu(\d y)\mu(\d x)\\
		&\leq \int_{\mathbb{R}^0}\int_{\mathbb{R}^0}\omega_\alpha |x|^{\alpha-1}f(y)\mu(\d y)\mu(\d x)\\
		&=\omega_\alpha I^{\sigma,\alpha}\|f\|_{L^1(\mathbb{R}^0; \mu)}.
xa	\end{split}
\end{equation*}
Therefore, $\|G^0\|_{L^1(\mathbb{R}^0;\mu)\rightarrow L^1(\mathbb{R}^0;\mu)}\leq \omega_\alpha I^{\sigma,\alpha}<\infty$, and  $$\|G^0\|_{L^\infty(\mathbb{R}^0;\mu)\rightarrow L^\infty(\mathbb{R}^0;\mu)}=\|G^0\|_{L^1(\mathbb{R}^0;\mu)\rightarrow L^1(\mathbb{R}^0;\mu)}<\infty.$$
From the Riesz-Thorin theorem, it follows that $\|G^0\|_{L^2(\mathbb{R}^0;\mu)\rightarrow L^2(\mathbb{R}^0;\mu)}<\infty$. By the definition of resolvent, $0\in\rho(\mathcal{L}^0)$.

Next, we show that $G^0$ is a Hilbert-Schmidt operator, which implies that $G^0$ is compact.

Note that 
\begin{equation}
	\begin{split}
	\int_{\mathbb{R}^0}	\int_{\mathbb{R}^0}(G^0(x,y))^2\mu(\d x)\mu(\d y)&\leq 	\int_{\mathbb{R}^0}	\int_{\mathbb{R}^0} \omega_\alpha^2 (|x|^{\alpha-1}\wedge |y|^{\alpha-1})^2\mu(\d x)\mu(\d y)\\
	&\leq \omega_\alpha^2\int_{\mathbb{R}^0}|x|^{\alpha-1}\mu(\d x)\omega_\alpha\int_{\mathbb{R}^0}|y|^{\alpha-1}\mu(\d y)=(\omega_\alpha I^{\sigma,\alpha})^2<\infty,
	\end{split}
\end{equation}
therefore $G^0(x,y)\in L^2(\mathbb{R}^0\times\mathbb{R}^0;\mu\times\mu)$, and hence $G^0$ is a Hilbert-Schmidt operator.\par 
\end{proof}

Theorem \ref{compact} indicates that there exists a complete orthonormal set of eigenfunctions $\{ \psi_n \}_{n\geq 0}$ with $\|\psi_n\|_{L^2(\mathbb{R}^0;\mu)}=1$, such that $ P_t^0 \psi_n=e^{-\lambda_n t} \psi_n $ for any $n\geq 0$ and $t\geq 0$, where $\{ \lambda_n \}_{n\geq 0}$ are eigenvalues of generator of $(P_t^0)_{t\geq 0}$, satisfying $0<\lambda_0< \lambda_1\leq \dots \rightarrow +\infty$, where the positivity and simplicity of  $\lambda_0$  will be proved in Appendix, Proposition A.2. The principal eigenfunction $\psi_0$ is called the {\bf ground state}. Next, we are going to prove some basic properties about $\psi_0$. We prove the positivity and continuity of $\psi_0$ on $\mathbb{R}^0$ in the following theorem.

\begin{thm}\label{pos_con}
	If $I^{\sigma,\alpha}<\infty$, then $\psi_0$ can be chosen to be strictly positive and continuous on $\mathbb{R}^0$.
\end{thm}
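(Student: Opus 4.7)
The plan is to first fix the sign of $\psi_0$ using the Markovian structure of $(\mathscr{E}^0, \mathscr{F}^0)$, and then extract a pointwise continuous, strictly positive representative out of the $L^2$-equivalence class by means of the Green-kernel representation.

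For the sign, I would observe that since the absolute value is a normal contraction on the symmetric Markovian Dirichlet form $(\mathscr{E}^0, \mathscr{F}^0)$, one has $|\psi_0| \in \mathscr{F}^0$ and $\mathscr{E}^0(|\psi_0|, |\psi_0|) \leq \mathscr{E}^0(\psi_0, \psi_0) = \lambda_0$. Because $\lambda_0$ is the infimum of the Rayleigh quotient $\mathscr{E}^0(f, f) / \|f\|_{L^2(\mu)}^2$, this inequality is forced to be an equality, so $|\psi_0|$ is itself a ground state. The simplicity of $\lambda_0$ from Proposition A.2 then gives $|\psi_0| = \pm \psi_0$; flipping the sign if necessary, one may assume $\psi_0 \geq 0$ $\mu$-a.e.

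For continuity, I would work with the explicit pointwise representative
\[
\tilde\psi_0(x) := \lambda_0 \int_{\mathbb{R}^0} G_X^0(x, y) \, \psi_0(y) \, \mu(\d y), \qquad x \in \mathbb{R}^0,
\]
which coincides with $\psi_0$ in $L^2(\mathbb{R}^0; \mu)$ by the identity $\psi_0 = \lambda_0 G^0 \psi_0$ together with \eqref{Y_X}. The formula \eqref{G_X^0} shows $x \mapsto G_X^0(x, y)$ is continuous at every $x \in \mathbb{R}^0$ for each fixed $y \in \mathbb{R}^0$, and on any neighborhood $U$ of a point $x_0 \in \mathbb{R}^0$ whose closure avoids $0$, the bound \eqref{esti for G0} furnishes the uniform envelope
\[
G_X^0(x, y) \, \psi_0(y) \leq \omega_\alpha \bigl(K^{\alpha-1} \wedge |y|^{\alpha-1}\bigr) \psi_0(y), \qquad x \in U,
\]
with $K := \sup_{x \in U} |x|$. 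Using $(K^{\alpha-1} \wedge |y|^{\alpha-1})^2 \leq K^{\alpha-1} |y|^{\alpha-1}$, the Cauchy--Schwarz inequality, and the assumption $I^{\sigma, \alpha} < \infty$, one checks this envelope lies in $L^1(\mathbb{R}^0; \mu)$; dominated convergence then delivers continuity of $\tilde\psi_0$ at $x_0$.

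Strict positivity reduces to $G_X^0(x, y) > 0$ for all $x, y \in \mathbb{R}^0$, since then the integrand is nonnegative and positive on a set of positive $\mu$-measure (because $\|\psi_0\|_{L^2(\mu)} = 1$ forces $\psi_0 \not\equiv 0$). From \eqref{G_X^0} this amounts to the strict inequality $|x|^{\alpha-1} + |y|^{\alpha-1} > |y - x|^{\alpha-1}$, which follows by combining $|y - x| \leq |x| + |y|$ with the strict subadditivity of $t \mapsto t^{\alpha-1}$ on $(0, \infty)$, valid since $\alpha - 1 \in (0, 1)$; equality would force $x = 0$ or $y = 0$. The main delicate step is securing the uniform $L^1(\mu)$-envelope above, which is where the entrance-from-infinity condition $I^{\sigma, \alpha} < \infty$ is essential; the rest is a direct consequence of the explicit kernel formula \eqref{G_X^0} and the Markovian structure of the Dirichlet form.
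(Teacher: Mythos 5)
Your proof is correct and follows essentially the same route as the paper's: nonnegativity via the Markovian structure of the Dirichlet form, the pointwise representative $\lambda_0 G^0\psi_0$ through \eqref{Y_X}, strict positivity of the kernel from the explicit formula \eqref{G_X^0}, and dominated convergence for continuity. The only (harmless) variations are that you invoke the simplicity of $\lambda_0$ from Proposition A.2 to fix the sign where the paper instead derives the strict inequality $\mathscr{E}(\psi_0,\psi_0)>\mathscr{E}(|\psi_0|,|\psi_0|)$ directly, and you prove $G_X^0(x,y)>0$ by strict subadditivity of $t\mapsto t^{\alpha-1}$ rather than by the monotonicity argument in the paper.
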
	
\begin{proof}
Firstly, we prove $\psi_0$ can be chosen to be nonnegative. 

Actually,  by using the proof by contradiction, we have $\mu(\{x: \psi_0(x)\geq 0 \})=1$  or $\mu(\{x: \psi_0(x) \leq 0 \})=1$. Assume that { $\mu(\{x:\psi_0(x)>0\})>0$ and $\mu(\{x:\psi_0(x)<0\})>0$. Then $\mathrm{Leb}(\{(x,y): \psi_0(x)\psi_0(y)<0\})>0$, where $\mathrm{Leb}(\cdot)$ is the Lebesgue measure.}
	Note that on $\{(x,y): \psi_0(x)\psi_0(y)\geq 0\}$,
	$$(\psi_0(x)-\psi_0(y))^2=(|\psi_0(x)|-|\psi_0(y)|)^2,$$
	while on $\{(x,y): \psi_0(x)\psi_0(y)< 0\}$,
	$$ (\psi_0(x)-\psi_0(y))^2 > (|\psi_0(x)|-|\psi_0(y)|)^2.$$
	Therefore,
	\begin{equation}\label{inf contradiction}
		\begin{split}
			\mathscr{E}(\psi_0,\psi_0)&=\frac{1}{2} \left( \int_{\{(x,y): \psi_0(x)\psi_0(y)\geq 0\}} + \int_{\{(x,y): \psi_0(x)\psi_0(y)< 0\}} \right) (\psi_0(x)-\psi_0(y))^2 \frac{C_\alpha \d x \d y}{|x-y|^{1+\alpha}}\\
			&>\frac{1}{2} \left( \int_{\{(x,y): \psi_0(x)\psi_0(y)\geq 0\}} + \int_{\{(x,y): \psi_0(x)\psi_0(y)< 0\}} \right) (|\psi_0(x)|-|\psi_0(y)|)^2 \frac{C_\alpha \d x \d y}{|x-y|^{1+\alpha}}\\
			&=\mathscr{E}(|\psi_0|,|\psi_0|).
		\end{split}
	\end{equation}
	However, by the definition of ground state and the first Dirichlet eigenvalue,
	\begin{equation*}
	\mathscr{E}(\psi_0,\psi_0)=\lambda_0=\inf\{\mathscr{E}(f,f): \mu(f^2)=1, f(0)=0, f\in \mathscr{F}\}\leq \mathscr{E}(|\psi_0|,|\psi_0|),
	\end{equation*}
	which contradicts \eqref{inf contradiction}. Thus  $\mu\{x:\psi_0(x)>0\}=0$ or $\mu\{x:\psi_0(x)<0\}=0$. If $\mu\{x:\psi_0(x)>0\}=0$, then $-\psi_0$ satisfies $P_t^0 (-\psi_0)=e^{-\lambda_0 t}(-\psi_0)$ and $\mu\{x:-\psi_0(x)< 0\}=0$.
	So we can always choose eigenfunction $\psi_0$ satisfying
	\begin{equation}\label{psi_0<0=0}
      \mu(\{x:\psi_0(x)\geq0\})=1.
	\end{equation}
	Note that by the definition of ground state, it is easy to see that
	\begin{equation*}
		G^0\psi_0(x)=\int_{0}^{\infty}P_t^0\psi_0(x)\d t=\int_{0}^{\infty}\e^{-\lambda_0 t}\psi_0(x)\d t=\frac{\psi_0(x)}{\lambda_0}.
	\end{equation*}
	Thus by combining the above equality, \eqref{Y_X}, and \eqref{psi_0<0=0}, we get that for any $x\neq 0$, 
	\begin{equation}\label{dom con thm}
		\frac{\psi_0(x)}{\lambda_0}=\int_{\mathbb{R}^0} G_X^0(x,y) \psi_0(y) \mu(\d y)\geq 0,
	\end{equation}
	which proves $\psi_0$ is nonnegative.

Next, we prove $\psi_0$ is strictly positive. 

Actually, 
\begin{equation}\label{positive Green}
G_X^0(x,y)>0\text{ for any }x, y \neq 0.
\end{equation}
According to \eqref{G_X^0}, by noting that $G_X^0(x,y)=G_X^0(-x,-y)$, we just need to prove that for any $x>0$, $y\neq 0$, $G_X^0(x,y)>0$. Indeed, $G_X^0(x,\cdot)$ is strictly decreasing in $(-\infty,0), \ (x,+\infty)$ and strictly increasing in $(0,x)$. Since for any $y>x$, by \cite[Page 592]{W21+},
$$G_X^0(x,y)>\frac{\omega_\alpha}{2} (x\wedge y)^{\alpha-1}= \frac{\omega_\alpha}{2} x^{\alpha-1}>0,$$ and $G_X^0(x,0)=0$, then  $G_X^0(x,y)>0$ for any $y\neq 0,\ x>0$. 

Now, if there would exist $x_0\neq0$ such that $\psi_0(x_0)=0$, then by \eqref{dom con thm},
\begin{equation*}
	0=\frac{\psi_0(x_0)}{\lambda_0}=\int_{\mathbb{R}^0} G_X^0(x_0,y) \psi_0(y) \mu(\d y).
\end{equation*}
By \eqref{positive Green}, and the non-negativity of $\psi_0$,  we would have $\psi_0=0$, $\mu$-a.e., which is contradictory to $ \Vert \psi_0 \Vert _{L^2(\mathbb{R}^0;\mu)}=1$. So $\psi_0$ is strictly positive on $\mathbb{R}^0$.

Finally, to prove the continuity of $\psi_0$ on $\mathbb{R}^0$, w.l.o.g., we assume $x_0>0$.
According to \eqref{esti for G0}, for any $|x-x_0|<1\wedge \frac{x_0}{2}$,
\begin{equation*}
	\int_{\mathbb{R}^0} G_X^0(x,y) \psi_0(y) \mu(\d y)\leq \omega_\alpha \int_{\mathbb{R}^0} (|x|\wedge|y|)^{\alpha-1} \psi_0(y) \mu(dy) \leq \omega_\alpha(x_0+1)^{\alpha-1} \mu(\psi_0)<\infty.
\end{equation*}
By using \eqref{dom con thm}, the continuity of $G(\cdot, y)$ on $\mathbb{R}^0$ for any $y\neq 0$ and dominated convergence theorem,
 we have thus proved Theorem \ref{pos_con}.
\end{proof}

Next, to prove the boundedness of $\psi_0$, we need some lemmas.
\begin{lem}\label{equ_con}
	$I^{\sigma,\alpha}:=\int_{\mathbb{R}} \sigma(x)^{-\alpha}|x|^{\alpha-1} \d x <\infty$ if and only if $$\lim_{R\rightarrow \infty} \sup_{x\in \mathbb{R}} \mathbb{E}_x[T_{[-R,R]}]=0.$$
	
\end{lem}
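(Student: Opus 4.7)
The plan is to express the mean hitting time as a Green integral using \eqref{Y_X}:
\begin{equation*}
    \mathbb{E}_x[T_{[-R,R]}] = \int_{|y|>R} G_X^{[-R,R]^c}(x,y)\, \sigma(y)^{-\alpha} \d y,
\end{equation*}
so that each direction of the equivalence reduces to a bound on this integral using the explicit information available about $G_X^{[-R,R]^c}$: the pointwise estimate \eqref{esti for G0} combined with domination by $G_X^0$ for the easy direction, and the scaling identity \eqref{sel_1} together with the limit \eqref{G_X^1clim} for the reverse direction.

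For the implication $I^{\sigma,\alpha}<\infty \Rightarrow \lim_{R\to\infty} \sup_x \mathbb{E}_x[T_{[-R,R]}]=0$, I would use that killing on the larger set $[-R,R] \supset \{0\}$ yields a shorter lifetime, so $G_X^{[-R,R]^c}(x,y) \leq G_X^0(x,y)$. Substituting \eqref{esti for G0} gives an $x$-independent majorant
\begin{equation*}
    \mathbb{E}_x[T_{[-R,R]}] \leq \omega_\alpha \int_{|y|>R} |y|^{\alpha-1}\sigma(y)^{-\alpha} \d y,
\end{equation*}
which tends to $0$ as $R\to\infty$ by dominated convergence under $I^{\sigma,\alpha}<\infty$.

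For the converse, the hypothesis allows me to fix $R_0$ with $M := \sup_x \mathbb{E}_x[T_{[-R_0,R_0]}] < \infty$. The self-similarity identity \eqref{sel_1}, followed by the change of variables $v = y/R_0$, produces
\begin{equation*}
    \mathbb{E}_x[T_{[-R_0,R_0]}] = R_0^{\alpha} \int_{|v|>1} G_X^{[-1,1]^c}(x/R_0,v)\, \sigma(R_0 v)^{-\alpha} \d v.
\end{equation*}
Letting $x \to \infty$ and invoking Fatou's lemma together with \eqref{G_X^1clim} yields the lower bound $K_\alpha R_0^{\alpha-1}\int_{|y|>R_0} h(y/R_0) \sigma(y)^{-\alpha} \d y \leq M$. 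Since the definition of $h$ gives the asymptotic $h(z) \sim |z|^{\alpha-1}/(\alpha-1)$ as $|z|\to\infty$, this forces $\int_{|y|>M_1} |y|^{\alpha-1}\sigma(y)^{-\alpha}\d y < \infty$ for some $M_1>0$; finiteness of the integral over $[-M_1,M_1]$ is automatic from continuity and strict positivity of $\sigma$ combined with $\alpha>1$, yielding $I^{\sigma,\alpha}<\infty$.

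The step that will require the most care is the Fatou lower bound in the converse direction, where one must exploit only the pointwise convergence \eqref{G_X^1clim} and the nonnegativity of the Green function to justify passing $x \to \infty$ under the integral; the domination argument and the asymptotic analysis of $h$ near infinity are both routine.
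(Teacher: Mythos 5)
Your proposal is correct and follows essentially the same route as the paper: the forward direction via domination of $G_X^{[-R,R]^c}$ by $G_X^0$ and the estimate \eqref{esti for G0}, and the converse via the scaling identity \eqref{sel_1}, Fatou's lemma with the limit \eqref{G_X^1clim}, and the growth $h(z)\asymp |z|^{\alpha-1}$ at infinity. The only difference is that you argue the converse directly from finiteness of $\sup_x\mathbb{E}_x[T_{[-R_0,R_0]}]$ while the paper phrases it contrapositively, which is immaterial.
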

\begin{proof}
    If $I^{\sigma, \alpha}<+\infty$,  from \eqref{Y_X}, \eqref{esti for G0}, 
	\begin{align*}
		\sup_x \mathbb{E}_x[T_{[-R,R]}]&=\sup_{x\in [-R,R]^c}\int_{[-R,R]^c} G^{[-R,R]^c}(x,\d y)\\
		&=\sup_{x\in [-R,R]^c} \int_{[-R,R]^c} G_X^{[-R,R]^c}(x, y) \mu(\d y)\\
		&\leq \sup_{x\in [-R,R]^c} \int_{[-R,R]^c}  G_X^0(x,y) \mu(\d y)\\
		&\leq \omega_\alpha \int_{[-R,R]^c} |y|^{\alpha-1} \mu(\d y),
	\end{align*}
	By letting $R \rightarrow +\infty$, we obtain that $\sup_x \mathbb{E}_x[T_{[-R,R]}] \rightarrow 0 $.\par
	
	On the contrary, if $I^{\sigma,\alpha}=\infty$, then for any $R>0$,  by using \eqref{G_X^1c}, \eqref{G_X^1clim} and \eqref{sel_1},
	\begin{align*}
		\sup_x \mathbb{E}_x[T_{[-R,R]}]&=\sup_{x\in [-R,R]^c} \int_{[-R,R]^c} R^{\alpha-1}G_X^{\left[-1,1\right]^c}\left(\frac{x}{R},\frac{y}{R}\right) \mu(\d y)\\
		&\geq \int_{[-R,R]^c} \liminf_{x\rightarrow +\infty} R^{\alpha-1} G_X^{\left[-1,1\right]^c}\left(\frac{x}{R},\frac{y}{R}\right) \mu(\d y)\\
		&=\int_{[-R,R]^c} R^{\alpha-1} K_\alpha h\left(\frac{y}{R}\right) \mu(\d y)\\
		&\geq \frac{K_\alpha}{\alpha}\int_{[-R,R]^c} R^{\alpha-1} \left(\frac{|y|^{\alpha-1}}{R^{\alpha-1}}-1\right) \mu(\d y)=+\infty. 
	\end{align*}
\end{proof}
\begin{rem}\label{equ_rem}
	{According to the proof of \cite[Theorem 1.4]{W21+}, it is easy to verify that   $I^{\sigma,\alpha}<+\infty $ is  equivalent to  	$\sup_{x\in \mathbb{R}} \mathbb{E}_x[T_0] <+\infty  $; now by Proposition \ref{equ_con}, we know $I^{\sigma,\alpha}<+\infty $ is also equivalent to   $\sup_{x\in \mathbb{R}} \mathbb{E}_x[T_{[-1,1]}] <+\infty $.}
\end{rem}

\begin{lem}\label{bou_lem}
If $I^{\sigma,\alpha}<\infty$, then for any $\lambda>0$, there exists a constant $R=R(\lambda)$, such that
\begin{equation*}
	\sup_{x\in \mathbb{R}^0} \mathbb{E}_x[e^{\lambda T_{[-R,R]}}] <+\infty.
\end{equation*}
\end{lem}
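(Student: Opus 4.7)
The plan is to combine Lemma \ref{equ_con} with a standard Markov-iteration/Chernoff argument that bounds all exponential moments once the tail of $T_{[-R,R]}$ decays geometrically with a rate faster than $\lambda$. Roughly, I will first use $I^{\sigma,\alpha}<\infty$ together with Lemma \ref{equ_con} to make the mean return time to $[-R,R]$ arbitrarily small by enlarging $R$; this converts, via Markov's inequality, into a uniform tail bound at a fixed deterministic time $t_0$ that can be made smaller than $e^{-\lambda t_0}$; the Markov property then upgrades this to geometric decay and hence to a uniformly finite exponential moment.

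More precisely, I would proceed in four steps. First, fix $\lambda>0$ and choose a deterministic $t_0>0$ — for definiteness $t_0:=1/\lambda$, so that $t_0 e^{-\lambda t_0}=1/(e\lambda)$ is positive. Second, by Lemma \ref{equ_con}, pick $R=R(\lambda)$ so large that
\[
\sup_{x\in\mathbb{R}}\mathbb{E}_x\bigl[T_{[-R,R]}\bigr]\ \le\ \tfrac{1}{2}\,t_0\,e^{-\lambda t_0};
\]
Markov's inequality then gives
\[
q\ :=\ \sup_{x\in\mathbb{R}^0}\mathbb{P}_x\bigl[T_{[-R,R]}>t_0\bigr]\ \le\ \frac{\sup_x\mathbb{E}_x[T_{[-R,R]}]}{t_0}\ \le\ \tfrac{1}{2}e^{-\lambda t_0}\ <\ e^{-\lambda t_0}.
\]
Third, I would iterate via the Markov property at time $(n-1)t_0$: on $\{T_{[-R,R]}>(n-1)t_0\}$ we have $Y_{(n-1)t_0}\in[-R,R]^c\subseteq \mathbb{R}^0$, so
\[
\mathbb{P}_x\bigl[T_{[-R,R]}>nt_0\bigr]\ =\ \mathbb{E}_x\!\left[\mathbf 1_{\{T_{[-R,R]}>(n-1)t_0\}}\,\mathbb{P}_{Y_{(n-1)t_0}}\!\bigl[T_{[-R,R]}>t_0\bigr]\right]\ \le\ q\cdot\mathbb{P}_x\bigl[T_{[-R,R]}>(n-1)t_0\bigr],
\]
yielding $\sup_{x\in\mathbb{R}^0}\mathbb{P}_x[T_{[-R,R]}>nt_0]\le q^n$ for all $n\ge 0$.

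Fourth, slicing the expectation across the intervals $[nt_0,(n+1)t_0)$, for any $x\in\mathbb{R}^0$,
\[
\mathbb{E}_x\bigl[e^{\lambda T_{[-R,R]}}\bigr]\ \le\ \sum_{n=0}^{\infty} e^{\lambda(n+1)t_0}\,\mathbb{P}_x\bigl[T_{[-R,R]}\ge nt_0\bigr]\ \le\ e^{\lambda t_0}\sum_{n=0}^{\infty}\bigl(e^{\lambda t_0}q\bigr)^n,
\]
and since $e^{\lambda t_0}q\le 1/2<1$ by construction, the series is summable and the bound is independent of $x$, proving the lemma.

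I do not foresee a genuine obstacle beyond bookkeeping: the only subtle point is that the iteration requires the intermediate state $Y_{(n-1)t_0}$ to lie in the range over which $q$ was taken as a supremum, which is automatic since $\{T_{[-R,R]}>(n-1)t_0\}\subseteq\{Y_{(n-1)t_0}\in[-R,R]^c\}\subseteq\mathbb{R}^0$, so $Y$ does not need to be restarted from $0$. Everything else is routine Chernoff-style summation, and the whole argument makes essential use of Lemma \ref{equ_con} (and hence of the entrance-from-infinity condition $I^{\sigma,\alpha}<\infty$).
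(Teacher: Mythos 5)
Your argument is correct and is essentially the paper's own proof: both use Lemma \ref{equ_con} to make $\sup_x\mathbb{E}_x[T_{[-R,R]}]$ small, convert this via Markov's inequality into a tail bound smaller than $e^{-\lambda t_0}$ at a fixed time step, iterate with the Markov property to get geometric decay, and sum the resulting series (the paper simply takes $t_0=1$ and $\epsilon<e^{-\lambda}$ instead of $t_0=1/\lambda$). The only cosmetic slip is writing $\mathbb{P}_x[T_{[-R,R]}\ge nt_0]\le q^n$ where the iteration literally gives $\mathbb{P}_x[T_{[-R,R]}>nt_0]\le q^n$; using $q^{n-1}$ for the closed-inequality tail changes nothing.
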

\begin{proof}
 It follows from Lemma \ref{equ_con} that, for any $\epsilon >0$, there exists $R=R(\epsilon)$, such that 
\begin{equation*}
	\sup_{x} \mathbb{E}_x[T_{[-R,R]}] \leq\epsilon.
\end{equation*} 
Using Markov's inequality, we have
\begin{equation*}
	\sup_{x} \mathbb{P}_x[T_{[-R,R]}>1] \leq \epsilon.
\end{equation*} 
By Markov property, for any $n\geq 2$,
$$
   \begin{aligned}
		\mathbb{P}_x\left(T_{[-R,R]}>n \right) &=\mathbb{E}_x\left[\mathbf{1}_{\left\{T_{[-R,R]}>1\right\}} \mathbf{1}_{\left\{T_{[-R,R]} \circ \theta_1>n-1 \right\}}\right] \\
		&=\mathbb{E}_x\left[\mathbf{1}_{\left\{T_{[-R,R]}>1\right\}} \mathbb{E}_{X_1}\left[\mathbf{1}_{\left\{T_{[-R,R]}>n-1 \right\}}\right]\right] \\
		& \leq \mathbb{P}_x\left(T_{[-R,R]}>1\right) \sup _{x} \mathbb{P}_x\left(T_{[-R,R]}>n-1 \right) .
	\end{aligned}
$$
Then by induction, 
\begin{equation*}
	\sup_{x} \mathbb{P}_x[T_{[-R,R]}>n] \leq \epsilon^n.
\end{equation*}
By Fubini Theorem, it comes to the fact that for any $\lambda>0$, we can take $\epsilon <e^{-\lambda} $ and $R=R(\epsilon)$ such that for any $x\in\mathbb{R}^0$,
\begin{align*}
	\mathbb{E}_x[e^{\lambda T_{[-R,R]}}]
	&=\int_{0}^{+\infty} \lambda e^{\lambda s} \mathbb{P}_x[T_{[-R,R]}>s]dt
	\leq\sum_{i=0}^{+\infty} \lambda e^{\lambda(i+1)} \mathbb{P}_x[T_{[-R,R]}>i]+1\\
	&\leq \sum_{i=0}^{+\infty} \lambda e^{\lambda(i+1)} \epsilon^i +1 \leq \frac{\lambda e^\lambda}{1-e^\lambda\epsilon}+1<+\infty.
\end{align*}
\end{proof}

Using Lemma \ref{bou_lem} and following the proof of \cite[Lemma 5.3 and Theorem 5.4]{T18}, we obtain the following corollary.
\begin{cor}\label{bou}
If $I^{\sigma,\alpha}<\infty$, then $\psi_0$ is bounded.
\end{cor}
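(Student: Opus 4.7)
The plan is to split the argument in two: bound $\psi_0$ on a suitably chosen interval $[-R,R]$ by elementary Green-function estimates, and bound $\psi_0$ outside $[-R,R]$ by a Feynman--Kac martingale identity at the stopping time $T_{[-R,R]}$, where the uniform exponential moment from Lemma \ref{bou_lem} is the key input.

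First, combining $\psi_0 = \lambda_0 G^0 \psi_0$ with \eqref{Y_X} and the crude kernel bound \eqref{esti for G0}, together with $\mu(\psi_0) \leq \Vert \psi_0 \Vert_{L^2(\mathbb{R}^0;\mu)} = 1$, I would immediately obtain
$$\psi_0(x) \leq \lambda_0 \omega_\alpha |x|^{\alpha-1}, \qquad x \in \mathbb{R}^0.$$
In particular $\psi_0(x) \to 0$ as $x \to 0$ and, using also the continuity from Theorem \ref{pos_con}, for every $R > 0$ the quantity $C_R := \sup_{y \in [-R,R] \setminus \{0\}} \psi_0(y)$ is finite. Hence only the behaviour of $\psi_0$ at infinity remains to be controlled.

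By Lemma \ref{bou_lem} (applied with any $\lambda > \lambda_0$), fix $R$ large enough that $B := \sup_{x \in \mathbb{R}^0} \mathbb{E}_x[e^{\lambda_0 T_{[-R,R]}}] < \infty$. The eigenfunction relation $P_t^0 \psi_0 = e^{-\lambda_0 t} \psi_0$ makes $M_t := e^{\lambda_0 t} \psi_0(Y_t) \mathbf{1}_{t < T_0}$ a non-negative $\mathbb{P}_x$-martingale, and I would aim to justify, for $x \notin [-R,R]$, the Feynman--Kac identity
$$\psi_0(x) = \mathbb{E}_x\!\left[e^{\lambda_0 T_{[-R,R]}} \psi_0(Y_{T_{[-R,R]}}); \, T_{[-R,R]} < T_0 \right].$$
Given this identity, the conclusion is immediate: on $\{T_{[-R,R]} < T_0\}$ we have $Y_{T_{[-R,R]}} \in [-R,R]\setminus\{0\}$, so $\psi_0(Y_{T_{[-R,R]}}) \leq C_R$ and hence $\psi_0(x) \leq C_R B < \infty$ uniformly in $x \notin [-R,R]$, which together with Step 1 proves boundedness.

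The key step---and the main obstacle---is the optional stopping identity itself. Using the Markov property at $T_{[-R,R]}$ (and $e^{-\lambda_0(t-s)} \psi_0 = P_{t-s}^0 \psi_0$ together with $T_{[-R,R]} \leq T_0$ for $x \notin [-R,R]$), one easily derives the exact decomposition
$$\psi_0(x) = \mathbb{E}_x\!\left[e^{\lambda_0 T_{[-R,R]}} \psi_0(Y_{T_{[-R,R]}}); \, T_{[-R,R]} \leq t,\, T_{[-R,R]} < T_0\right] + e^{\lambda_0 t} P_t^{[-R,R]^c} \psi_0(x).$$
Monotone convergence handles the first term as $t \to \infty$, so everything reduces to showing $e^{\lambda_0 t} P_t^{[-R,R]^c} \psi_0(x) \to 0$. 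For this I would enlarge $R$ further and use Lemma \ref{bou_lem} with some $\lambda > \lambda_0$ to guarantee a spectral gap $\lambda_\ast(R) > \lambda_0$ for $\mathcal{L}^{[-R,R]^c}$; then either a Cauchy--Schwarz argument of the form
$$e^{\lambda_0 t} P_t^{[-R,R]^c} \psi_0(x) \leq \mathbb{E}_x\!\left[e^{\lambda_0 T_{[-R,R]}} \psi_0(Y_t); \, t < T_{[-R,R]}\right] \leq \mathbb{E}_x[e^{2\lambda_0 T_{[-R,R]}}]^{1/2}\, \mathbb{E}_x[\psi_0(Y_t)^2 \mathbf{1}_{t < T_{[-R,R]}}]^{1/2},$$
combined with spectral decay of $P_t^{[-R,R]^c}$ applied to $\psi_0^2$, or the cleaner alternative of observing that $v := \psi_0 - u$ would be an $L^2$ eigenfunction of $\mathcal{L}^{[-R,R]^c}$ at $\lambda_0 < \lambda_\ast(R)$ and hence vanishes, will close the gap. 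This spectral separation, distilled from Lemma \ref{bou_lem}, is precisely the mechanism behind Takeda's argument \cite[Lemma 5.3 and Theorem 5.4]{T18} that we are adapting.
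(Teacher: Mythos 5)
Your argument is correct and is essentially the one the paper invokes: the paper's proof of Corollary \ref{bou} consists of citing Takeda's hitting-time decomposition, which is exactly your scheme --- optional stopping at $T_{[-R,R]}$, the uniform exponential moment from Lemma \ref{bou_lem}, a bound for $\psi_0$ on $[-R,R]$ from $\psi_0=\lambda_0 G^0\psi_0$ and \eqref{esti for G0}, and a spectral bound $\lambda_*(R)>\lambda_0$ on the exterior domain to kill the remainder $e^{\lambda_0 t}P_t^{[-R,R]^c}\psi_0$. Of your two proposed ways of closing that last step, the eigenfunction-vanishing argument for $v=\psi_0-u$ is the one that goes through cleanly (Markov's inequality plus Riesz--Thorin turn $\sup_x\mathbb{E}_x[e^{\lambda T_{[-R,R]}}]<\infty$ with $\lambda>\lambda_0$ into $\Vert P_t^{[-R,R]^c}\Vert_{L^2\to L^2}\le Ce^{-\lambda t}$, forcing $v=0$), whereas the Cauchy--Schwarz route as written would still need to control $P_t^{[-R,R]^c}(\psi_0^2)(x)$ pointwise with $\psi_0^2$ only known to lie in $L^1(\mu)$.
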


\section{Proof of the main results on QSD}

We verify the main results on QSD in this section. Firstly, we prove Theorem \ref{do<->ent}, which shows the existence and uniqueness of the QSD for $Y^0$, the existence of Yaglom limit and the exponential convergence to Yaglom limit when starting at a single point $x\neq 0$. Secondly, we provide a sufficient condition for exponential convergence to the QSD for any initial distribution (Theorem \ref{do_con}). Finally, we focus on Example \ref{eg} , which indicates the condition $I^{\sigma,\alpha}<+\infty$ is a sufficient and necessary condition for uniform exponential convergence on some occasions.  \par 
First of all, we prove Theorem \ref{do<->ent}. 

\begin{proof}[Proof of Theorem \ref{do<->ent}]
	(1) Firstly, we prove \begin{equation*}
		\nu(\text{d}x)=\frac{\psi_0(x)\mu(\d x)}{\int_{\mathbb{R}^0}\psi_0(x)\mu(\d x)}.
	\end{equation*} is a QSD for $Y^0$.

	Since $\psi_0 \in L^2(\mathbb{R}^0,\mu) $ and $\mu$ is a finite measure, $\psi_0 \in L^1(\mathbb{R}^0,\mu)$. According to \cite[Lemma 4.1.3]{FOT11} and the $\mu$-symmetry of $Y$, we know that $Y^0$ is $\mu$-symmetric.
	By using Theorem \ref{compact} and Theorem \ref{pos_con}, we get that for all $A\in \mathscr{B}(\mathbb{R}^0)$,
	\begin{align*}
		\mathbb{P}_{\nu}[Y_t\in A | t<T_0 ]&=\frac{\int_{\mathbb{R}^0} \mathbb{P}_x[Y_t\in A, t< T_0]\nu(\text{d}x)}{\int_{\mathbb{R}^0} \mathbb{P}_x[t< T_0]\nu(\text{d}x)}\\
		&=\frac{\int_{\mathbb{R}^0} P_t^0 \mathbf{1}_A(x) \psi_0(x) \mu(\text{d}x)}{\int_{\mathbb{R}^0} P_t^0 \mathbf{1}(x) \psi_0(x) \mu(\text{d}x)}\\
		&=\frac{\int_{\mathbb{R}^0} P_t^0 \psi_0(x)\mathbf{1}_A(x)  \mu(\text{d}x)}{\int_{\mathbb{R}^0} P_t^0  \psi_0(x)\mathbf{1}(x) \mu(\text{d}x)}\\
		&=\frac{\int_{\mathbb{R}^0} e^{-\lambda_0t}\psi_0(x)\mathbf{1}_A(x)\mu(\text{d}x)}{\int_{\mathbb{R}^0} e^{-\lambda_0t}\psi_0(x)\mathbf{1}(x)\mu(\text{d}x)}\\
		&=\nu(A).
	\end{align*}
	It comes to the conclusion that $\nu$ is a QSD for $Y^0$.\par
	(2) Secondly, we turn to prove the uniqueness of the QSD. Assume $\eta$ is also a QSD for $Y^0$, and there exists $A \in \mathscr{B}(\mathbb{R}^0)$ such that $\eta(A)>\nu(A)$.\par
	 From the proof of \cite[Proposition 3.5]{WZ21}, for any $ x \in \mathbb{R}$ and $t>0$, $P_t(x,\d y)$ has a density function $p_t(x,y)$ with respect to $\mu$. Using \cite[(3)]{CXW14}, we have for any $ x \in \mathbb{R}^0$ and $t>0$, $P_t^0(x,\d y)$ has a density function $p_t^0(x,y)$ with respect to $\mu$ satisfying
	\begin{equation*}
		P_t^0(x,\d y)=p_t^0(x,y) \mu(\d y), \qquad
		p_t^0(x,y)=p_t^0(y,x),\text{ for any }x,y\neq 0.
	\end{equation*}

	 From \cite[Theorem 2.2]{CMS13}, there exists $\lambda>0$, such that for any $A \in \mathbb{R}^0$, 
	\begin{equation*}
		\mathbb{P}_\eta[Y_t\in A, t<T_0]=e^{-\lambda t}\eta(A).
	\end{equation*}
	It follows that
	\begin{equation*}
		\int_{\mathbb{R}^0} p_t^0(x,y) \mathbf{1}_A(y) \mu(\d y) \eta(\d x)=e^{-\lambda t}\eta(A).
	\end{equation*}
	Then $\eta$ is absolutely continuous with respect to the Lebesgue measure. Let $\eta(\d x)=\eta(x) \d x$.
	
	By using Doob-$h$ transform, we define $Y^{\psi_0}$ with the transition semigroup as follows:
	\begin{equation*}
		P_t^{\psi_0} f=e^{\lambda_0 t} \frac{P_t^0(\psi_0 f)}{\psi_0}, \quad \text{for any } t>0. 
	\end{equation*}
	By definition of symmetry and conservativity, we know that $Y^{\psi_0}$ is $\psi_0^2\mu$-symmetric and conservative.
	Then we prove $Y^{\psi_0}$ is irreducible. Indeed, let $A { \subseteq \mathbb{R}^0}$ be a $P_t^{\psi_0}$-invariant measurable set satisfying $\psi_0^2\mu(A)>0$, according to \cite[Lemma 1.6.1]{FOT11}, $P_t^{\psi_0} \mathbf{1}_A(x)=0$ $\psi_0^2\mu$-a.e. on $\mathbb{R}^0\setminus A$ for any $t>0$. Therefore, $P_t^0 (\psi_0 \mathbf{1}_A)(x)=0$ $\psi_0^2\mu$-a.e. on $\mathbb{R}^0\setminus A$ for any $t>0$, which yields that $G^0 (\psi_0 \mathbf{1}_A)(x)=0$ $\psi_0^2\mu$-a.e. on $\mathbb{R}^0\setminus A$. Since $G^0(x,y)>0$ for any $x\neq 0$ and $y\neq 0$, $G^0 (\psi_0 \mathbf{1}_A)(x)>0$ for any $x\neq 0$. So $\psi_0^2 \mu(\mathbb{R}^0\setminus A)=0$, which means $Y^{\psi_0}$ is irreducible.
   Note that for any compact subsets $K,F\subseteq \mathbb{R}^0 $ and any $t>0$,
	\begin{equation}\label{etaK}
	\begin{split}
	\eta(K)&=\frac{\mathbb{P}_\eta [Y_t \in K, t<T_0]}{\mathbb{P}_\eta [t<T_0]}\leq \frac{\mathbb{P}_\eta[Y_t \in K, t<T_0]}{\mathbb{P}_\eta[Y_t \in F, t<T_0]}
	=\frac{\int_{\mathbb{R}^0}P_t^0 \mathbf{1}_K(x) \eta(\d x)}{\int_{\mathbb{R}^0}P_t^0 \mathbf{1}_F(x) \eta(\d x)}\\
	&=\frac{\int_{\mathbb{R}^0} \psi_0(x) P_t^{\psi_0}(\frac{\mathbf{1}_K}{\psi_0})(x)\eta(\d x)}{\int_{\mathbb{R}^0} \psi_0(x) P_t^{\psi_0}(\frac{\mathbf{1}_F}{\psi_0})(x)\eta(\d x)}.
	\end{split}
	\end{equation}
Note that by  the continuity and positivity of $\psi_0$, $$\left|\frac{\mathbf{1}_K}{\psi_0}\right|\leq \frac{1}{\inf_{x\in K}\psi_0(x)}<\infty.$$	
Combining this with the  irreducibility and conservativity of $Y^{\psi_0}$, \cite[Theorem 2.2]{T19} is valid, so
	\begin{equation*}\label{PtK}
	P_t^{\psi_0}\left(\frac{\mathbf{1}_K}{\psi_0}\right)  \rightarrow \frac{1}{ \mu(\mathbb{R}^0)} \int_{K} \psi_0  \d\mu, \ \text{a.e.} ,\ \text{as} \  t\rightarrow +\infty,
	\end{equation*}
	and
	\begin{equation*}\label{PtF}
	P_t^{\psi_0}\left(\frac{\mathbf{1}_F}{\psi_0}\right)  \rightarrow \frac{1}{ \mu(\mathbb{R}^0)} \int_{F} \psi_0  \d\mu, \ \text{a.e.} ,\ \text{as} \  t\rightarrow +\infty.
	\end{equation*}
	
	Therefore, by dominated convergence theorem, let $t\rightarrow\infty$ in \eqref{etaK}, we get that
	\begin{equation*}
	\eta(K)\leq \frac{\int_K \psi_0(x) \mu(\d x)}{\int_F \psi_0(x) \mu(\d x)}.
	\end{equation*}
	Then for any $x\neq 0$, by taking $K=[x,x+\Delta x]$ and $F\uparrow \mathbb{R}^0$, we arrive at
	\begin{equation*}
		\eta(x)=\lim_{\Delta x \rightarrow 0} \frac{\eta([x,x+\Delta x])}{\Delta x}\leq \lim_{\Delta x \rightarrow 0} \frac{\int_{[x,x+\Delta x]}\psi_0(x)\mu(\d x)}{\Delta x\mu(\psi_0)}=\frac{\psi_0(x)\sigma(x)^{-\alpha}}{\mu(\psi_0)}.
	\end{equation*} 
	It follows that for any $A\in \mathscr{B}(\mathbb{R}^0)$, $\eta(A)\leq \nu(A)$, which is a contradiction. Thus $\nu$ is the unique QSD.

(3)	Thirdly, we turn to the proof of the existence of Yaglom limit of $Y^0$. Using Theorem \ref{compact} and following the proof of \cite[Corollary 24]{MV12}, we have for any $A \in \mathscr{B}(\mathbb{R}^0)$ and any $t>2$,
\begin{align*}
&\Vert e^{\lambda_0t} P_t^0 \mathbf{1}_A-\lan\psi_0,\mathbf{1}_A\ran\psi_0\Vert_{L^2(\mathbb{R}^0;\mu)}^2\\ 
&=\Vert \sum_{n=1}^{+\infty } e^{(\lambda_0-\lambda_n)t} \lan\psi_n,\mathbf{1}_A\ran\psi_n\Vert_{L^2(\mathbb{R}^0;\mu)}^2=\sum_{n=1}^{+\infty } e^{2(\lambda_0-\lambda_n)t} \lan\psi_n,\mathbf{1}_A\ran^2\\ 
&\leq e^{2(\lambda_0-\lambda_1)(t-1)} e^{2\lambda_0} \sum_{n=0}^{+\infty } e^{-2\lambda_n} \lan\psi_n,\mathbf{1}_A\ran^2\\ 
&=e^{-2(\lambda_1-\lambda_0)(t-1)} e^{2\lambda_0} \Vert P_1^0 \mathbf{1}_A \Vert_{L^2(\mathbb{R}^0;\mu)}^2.
\end{align*}
By using H\"older inequality, we obtain
	\begin{align}\label{lim_t3}
	&|e^{\lambda_0(t-1)}\mathbb{P}_x[Y_t\in A, t<T_0]-\lan \psi_0,\mathbf{1}_A \ran e^{-\lambda_0} \psi_0(x)| \nonumber  \\
	&=|e^{\lambda_0 (t-1)} \lan P_{t-1}^0 \mathbf{1}_A, p_1^0(x,\cdot)\ran -\lan \psi_0,\mathbf{1}_A \ran \lan\psi_0, p_1^0(x,\cdot)\ran| \nonumber \\
	&\leq e^{-(\lambda_1-\lambda_0)(t-2)}e^{\lambda_0} (p_2^0(x,x))^{\frac{1}{2}}\rightarrow 0\text{ as }t\rightarrow +\infty,
	\end{align}
which yields that
	\begin{align}\label{lim_t1}
	\lim_{t \rightarrow \infty} e^{\lambda_0t} \mathbb{P}_x[Y_t \in A, t< T_0] 
	= \psi_0(x) \lan\psi_0,\mathbf{1}_A\ran,
	\end{align}
and
	\begin{equation}\label{lim_t2}
	\lim_{t \rightarrow \infty} e^{\lambda_0t} \mathbb{P}_x[t< T_0]= \psi_0(x) \lan\psi_0,\mathbf{1}\ran.
	\end{equation} 
 Therefore,
\begin{align*}
	\lim_{t \rightarrow \infty} \mathbb{P}_x[Y_t\in A| t<T_0]=\frac{\lim_{t \rightarrow \infty} e^{\lambda_0t} \mathbb{P}_x[Y_t \in A, t< T_0]}{\lim_{t \rightarrow \infty} e^{\lambda_0t} \mathbb{P}_x[t< T_0]}=\nu(A).
\end{align*}
	Hence $\nu$ is the Yaglom limit of $Y^0$.
	
	(4) Finally, we prove the exponential convergence to the Yaglom limit when starting at a single point $x\neq 0$.  
	Fix $x \neq 0$. According to the positivity (Theorem \ref{pos_con}), boundedness (Corollary \ref{bou}) of $\psi_0$, and \eqref{lim_t2}, 
	\begin{equation*}
	0<m_x:=\inf_{t>1} e^{\lambda_0(t-1)} \mathbb{P}_x[t<T_0] \leq \sup_{t>1} e^{\lambda_0(t-1)} \mathbb{P}_x[t<T_0] <\infty.
	\end{equation*} 
	 It follows from \eqref{lim_t3} that
	\begin{align}\label{esti for yaglom}
	&| \mathbb{P}_x[Y_t\in A| t<T_0]-\nu(A)  | \nonumber \\
	=&\left| \frac{e^{\lambda_0(t-1)}\mathbb{P}_x[Y_t\in A, t<T_0]}{e^{\lambda_0(t-1)} \mathbb{P}_x[t<T_0]} - \frac{e^{-\lambda_0} \psi_0(x) \lan\psi_0,\mathbf{1}_A\ran}{e^{-\lambda_0} \psi_0(x) \lan\psi_0,1\ran}     \right| \nonumber \\
	\leq& \frac{ \psi_0(x) \lan\psi_0,1\ran|e^{\lambda_0(t-1)}\mathbb{P}_x[Y_t\in A, t<T_0]-e^{-\lambda_0} \psi_0(x) \lan\psi_0,\mathbf{1}_A\ran  |}{m_x  \psi_0(x) \lan\psi_0,1\ran} \nonumber \\
	&+\frac{ \psi_0(x) \lan\psi_0,\mathbf{1}_A\ran|  e^{-\lambda_0} \psi_0(x) \lan\psi_0,1\ran-e^{\lambda_0(t-1)} \mathbb{P}_x[t<T_0]|}{m_x  \psi_0(x) \lan\psi_0,1\ran}  \nonumber \\
	\leq& \frac{2e^{-(\lambda_1-\lambda_0)(t-2)} e^{\lambda_0}  (p_2^0(x,x))^{\frac{1}{2}}   }{m_x }  .
	\end{align}
	Since 
	\begin{equation*}
	\Vert \mathbb{P}_x[Y_t \in \cdot | t<T_0] -\nu \Vert_{TV}=2\sup_{A\in \mathscr{B}(\mathbb{R}^0)} | \mathbb{P}_x[Y_t\in A| t<T_0]-\nu(A)  |,
	\end{equation*}
we arrive at \eqref{Yaglom-exp}	by choosing
	\begin{equation*}
	C(x)=\frac{4e^{2\lambda_1}(p_2^0(x,x))^{\frac{1}{2}}}{m_x}
	\end{equation*}
in \eqref{esti for yaglom}. Hence we finish the proof of Theorem \ref{do<->ent}.
\end{proof}

Next, we consider the problem about the domain of attraction of QSD and the speed of convergence. We prove Theorem \ref{do_con} as follows.  The idea of the proof benefits from \cite[Theorem 4.3]{ZH16} and \cite[Proof of Corollary 2.2.4]{V19}.

\begin{proof}[Proof of Theorem \ref{do_con}]
	Firstly, we prove the result about the domain of attraction.  We assume that $I^{\sigma,\alpha}<+\infty$ and for any $r_0>0$, $\sup_{x\in [-r_0,r_0]\setminus\{0\}} p_2^0(x,x) <+\infty$.\par 
	By taking $\lambda=\lambda_0$ in Lemma \ref{bou_lem}, there exists $R_0>0$ such that $$B_1:=\sup_{x\in \mathbb{R}^0} \mathbb{E}_x[e^{\lambda_0 T_{[-R_0,R_0]}}]<+\infty.$$
	Combining Corollary \ref{bou} and \eqref{lim_t3}, we have
	$$B_2:=\sup_{t>2} \sup_{x\in [-R_0,R_0]} e^{\lambda_0 t} \mathbb{P}_x[T_0>t]<+\infty. $$
	Then for any $x>R_0$, using strong Markov property, we get that
	\begin{align*}
		\mathbb{P}_x[T_0>t]&=\mathbb{P}_x[T_{[-R_0,R_0]}>t]+\mathbb{P}_x[T_{[-R_0,R_0]}\leq t, T_0>t]\\
		&\leq e^{-\lambda_0 t} \mathbb{E}_x[e^{\lambda_0 T_{[-R_0,R_0]}}] + 
		\mathbb{E}_x \left[ \mathbb{P}_x[T_{[-R_0,R_0]}\leq t,T_0>t | \mathscr{F}_{T_{[-R_0,R_0]}}]   \right]\\
		&\leq e^{-\lambda_0 t} B_1+ \mathbb{E}_x \left[ \mathbf{1}_{\{T_{[-R_0,R_0]} \leq t \}} \mathbb{P}_x[T_0>t | \mathscr{F}_{T_{[-R_0,R_0]}}]  \right]
		\\
		&\leq e^{-\lambda_0 t} B_1 + \int_{0}^{t} \int_{-R_0}^{R_0} \mathbb{P}_y[T_0>t-u] \mathbb{P}_x[T_{[-R_0,R_0]} \in \mathrm{d}u, X_{T_{[-R_0,R_0]}} \in \mathrm{d}y ]\\
		&\leq e^{-\lambda_0 t} B_1 + \int_{0}^{t} \int_{-R_0}^{R_0} B_2 e^{-\lambda_0(t-u)} \mathbb{P}_x[T_{[-R_0,R_0]} \in \mathrm{d}u, X_{T_{[-R_0,R_0]}} \in \mathrm{d}y ] \\
		&= e^{-\lambda_0 t} B_1+ e^{-\lambda_0 t} B_2 \int_{0}^{t} e^{\lambda_0 u} \mathbb{P}_x[T_{[-R_0,R_0]} \in \mathrm{d}u] \\
		&\leq e^{-\lambda_0 t} B_1+ e^{-\lambda_0 t} B_2 \mathbb{E}_x[e^{\lambda_0 T_{[-R_0,R_0]} }] \\
		&\leq e^{-\lambda_0 t} B_1(B_2+1)<+\infty.
	\end{align*} 
	Then by the above analysis,
	\begin{equation*}
		\sup_{x\in \mathbb{R}^0} \sup_{t>2}	e^{\lambda_0(t-1)} \mathbb{P}_x[Y_t \in A, t< T_0] \leq \sup_{x\in \mathbb{R}^0}	\sup_{t>2} e^{\lambda_0(t-1)} \mathbb{P}_x[t< T_0] <+\infty.
	\end{equation*}
	Therefore, by using dominated convergence theorem and \ref{lim_t1}, we have
	\begin{align*}
		\lim_{t \rightarrow \infty} e^{\lambda_0 t} \mathbb{P}_\eta[Y_t \in A, t< T_0]
		&=\lim_{t \rightarrow \infty} \int_{\mathbb{R}^0} e^{\lambda_0 t} \mathbb{P}_x[Y_t \in A, t< T_0] \eta(\d x)\\
		&=\int_{\mathbb{R}^0} \lim_{t \rightarrow \infty} e^{\lambda_0 t} \mathbb{P}_x[Y_t \in A, t< T_0] \eta(\d x)\\
		&=\int_{\mathbb{R}^0} \psi_0(x) \lan\psi_0,\mathbf{1}_A\ran \eta(\text{d}x),
	\end{align*}
where we use \eqref{lim_t1} in the last equality. 
	{Similarly,}
	\begin{equation*}
		\lim_{t \rightarrow \infty} e^{\lambda_0t} \mathbb{P}_\eta[t< T_0]=\int_{\mathbb{R}^0}  \psi_0(x) \lan\psi_0,1\ran \eta(\text{d}x).
	\end{equation*}
	{Thus we arrive at the conclusion that}
	\begin{align*}
		\lim_{t \rightarrow \infty} \mathbb{P}_\eta[Y_t \in A|t< T_0]=\frac{\lim_{t \rightarrow \infty} e^{\lambda_0 t} \mathbb{P}_\eta[Y_t \in A, t< T_0]}{\lim_{t \rightarrow \infty} e^{\lambda_0 t} \mathbb{P}_\eta[t< T_0]}=\nu(A).
	\end{align*}
	
	Secondly, we prove the result about the speed of convergence.
		
Assume that $I^{\sigma,\alpha}<+\infty$ and $\sup_x p_2^0(x,x) <+\infty$.
Let $h_t(x)=e^{\lambda_0 t}\mathbb{P}_x[T_0>t]$ and $h(x)=\psi_0(x)\mu(\psi_0)$. Then $\eta(h_t)=e^{\lambda_0 t} \mathbb{P}_\eta [T_0>t]$ and $\eta(h)=\eta(\psi_0)\mu(\psi_0)$.

We observe that
\begin{align}\label{con_4}
	\left\| \mathbb{P}_\eta[Y_t\in \cdot| t<T_0]-\nu \right\|_{TV} \nonumber 
	&=\left\| \frac{\mathbb{P}_\eta[Y_t\in \cdot, t<T_0]}{\mathbb{P}_\eta[t<T_0]}-\nu\right\|_{TV} \nonumber \\
	&=\left\| \frac{e^{\lambda_0 t}\eta P_t^0}{\eta(h_t)}-\nu \right\|_{TV} \nonumber \\
	&=\left\| e^{\lambda_0 t} \eta P_t^0 (\frac{1}{\eta(h_t)}-\frac{1}{\eta(h)}+\frac{1}{\eta(h)})-\nu\right\|_{TV} \nonumber \\
	&\leq \left\| \frac{e^{\lambda_0 t} \eta(h-h_t) \eta P_t^0}{\eta(h_t)\eta(h)} \right\|_{TV}
	+ \left\| \frac{e^{\lambda_0 t} \eta P_t^0-\eta(h) \nu}{\eta(h)}\right\|_{TV},
\end{align}
{where $\eta P_t^0(\cdot):=\int_{\mathbb{R}^0}P_t^0(x,\cdot)\eta(\d x )$.
Next, we prove the result by estimating the last two items of \eqref{con_4}}. 

According to \eqref{lim_t3}, if $\sup_x p_2^0(x,x) <+\infty$, then there exists a constant $C_1$ such that for any $A \in \mathscr{B}(\mathbb{R}^0)$,
\begin{equation}\label{con_1}
	|e^{\lambda_0 t} P_t^0 \mathbf{1}_A(x)-\psi_0(x)\int_A \psi_0(y) \mu(\d y)|\leq C_1 e^{-(\lambda_1-\lambda_0)t}.
\end{equation}
Since $\nu(\d x)=\frac{\psi_0(x)\mu(\mathrm{d} x)}{\mu(\psi_0)}$, then for any probability measure $\eta$ on $\mathbb{R}^0$,
we have 
\begin{align}\label{con_2}
	&\Vert e^{\lambda_0 t} \eta P_t^0- \eta(\psi_0)\mu(\psi_0) \nu   \Vert_{TV} \nonumber \\
	&=2 \sup_{A\in \mathscr{B}(\mathbb{R}^0)} \left| e^{\lambda_0 t} \eta P_t^0(A) -\eta(\psi_0) \int_A \psi_0(y) \mu(\d y) \right| \nonumber \\
	&=2 \sup_{A\in \mathscr{B}(\mathbb{R}^0)} \left| e^{\lambda_0 t} \int_{\mathbb{R}^0} P_t^0 \mathbf{1}_A(x) \eta(\d x) -\int_{\mathbb{R}^0} \psi_0(x) \int_A \psi_0(y)\mu(\d y) \eta(\d x)  \right| \nonumber \\
	&\leq 2 \sup_{A\in \mathscr{B}(\mathbb{R}^0)}  \int_{\mathbb{R}^0} \left| e^{\lambda_0 t} P_t^0 \mathbf{1}_A(x)-\psi_0(x) \int_A\psi_0(y)\mu(\d y) \right| \eta(\d x) \nonumber \\
	&\leq 2C_1 e^{-(\lambda_1-\lambda_0)t}.
\end{align}
Let $\eta-\nu=\tilde{\eta}$. $\tilde{\eta}$ is a signed measure and its Hahn decomposition is denoted by $\tilde{\eta}=\tilde{\eta}_+-\tilde{\eta}_-$. By calculation, it is easy to prove that $\nu(h)=\nu(h_t)=1$ and $e^{\lambda_0 t} \nu P_t^0=\nu $,  so
$$\tilde{\eta}(h-h_t)=\eta(h-h_t),\qquad e^{\lambda_0 t} \eta P_t^0-\eta(h) \nu=e^{\lambda_0 t} \tilde{\eta} P_t^0-\tilde{\eta}(h) \nu.$$
Using \eqref{con_2}, it follows that 
\begin{align*}
	&\Vert e^{\lambda_0 t} \eta P_t^0-\eta(h) \nu\Vert_{TV}=\Vert e^{\lambda_0 t} \tilde{\eta} P_t^0-\tilde{\eta}(h) \nu\Vert_{TV}\\
	&\leq \Vert e^{\lambda_0 t} \tilde{\eta}_+ P_t^0-\tilde{\eta}_+(h) \nu\Vert_{TV}+\Vert e^{\lambda_0 t} \tilde{\eta}_- P_t^0-\tilde{\eta}_-(h) \nu\Vert_{TV}\\
	&\leq 2C_1	e^{-(\lambda_1-\lambda_0)t} \Vert \tilde{\eta} \Vert_{TV}.
\end{align*}
Besides,  from \eqref{con_1}, we obtain that
\begin{align*}
	\left\| \frac{e^{\lambda_0 t} \eta(h-h_t)\eta P_t^0}{\eta(h_t)} \right\|_{TV}&=\Vert   \mathbb{P}_\eta[Y_t \in \cdot |t<T_0]\tilde{\eta}(h-h_t) \Vert_{TV}\\
	&=2 \sup_{A\in \mathscr{B}(\mathbb{R}^0)}  \mathbb{P}_\eta[Y_t \in A|t<T_0]  \left| \int_{\mathbb{R} ^0}  (h(x)-h_t(x))   \tilde{\eta}(\d x) \right|\\
	&\leq 2C_1	e^{-(\lambda_1-\lambda_0)t} \Vert \tilde{\eta} \Vert_{TV}.
\end{align*}
Combining with \eqref{con_4}, it comes to the conclusion by taking $C=4C_1/\mu(\psi_0)$.
\end{proof}

{ In the following, we point out $I^{\sigma,\alpha}<+\infty$ is necessary for the QSD attracting all probability measures on $\mathbb{R}^0$.}
\begin{thm}\label{do->ent}
	If there exists a QSD $\pi$ such that for any probability measure $\eta$ on $\mathbb{R}^0$ and any subset $ A\in\mathscr{B}(\mathbb{R}^0)$,
	\begin{equation*}
		\lim_{t \rightarrow \infty} \mathbb{P}_\eta[Y_t\in A | t<T_0]=\pi(A),
	\end{equation*}
	then $I^{\sigma,\alpha}<\infty$. 
\end{thm}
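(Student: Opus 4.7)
The plan is to argue by contradiction: suppose $I^{\sigma,\alpha}=+\infty$ and contradict the DOA hypothesis.

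First I would translate $I^{\sigma,\alpha}=+\infty$ into \emph{non-entrance from infinity}. By the Fatou argument in the $\Leftarrow$ direction of the proof of Lemma \ref{equ_con} (equivalently by \cite[Table 2]{DK20}), for every fixed $R>0$ and $t>0$,
$$\lim_{|x|\to\infty}\mathbb{P}_x[\tau_{[-R,R]}>t]=1.$$
Since $\pi$ is a probability measure on $\mathbb{R}^0$, pick $R_0$ large with $p_0:=\pi(K_0)>\tfrac12$, where $K_0:=[-R_0,R_0]\cap\mathbb{R}^0$. Using the inclusions $\{Y_t\in K_0,\,t<T_0\}\subset\{\tau_{[-R_0,R_0]}\le t\}$ and $\{t<T_0\}\supset\{\tau_{[-R_0,R_0]}>t\}$, one obtains for any probability measure $\eta$ the pointwise bound
$$\mathbb{P}_{\eta}[Y_t\in K_0\mid t<T_0]\le\frac{1-\mathbb{P}_{\eta}[\tau_{[-R_0,R_0]}>t]}{\mathbb{P}_{\eta}[\tau_{[-R_0,R_0]}>t]}.$$

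Next I would apply DOA to an initial measure $\eta_M$ sitting far from the origin; for example, take $\eta_M$ to be $\mu$ restricted to the annulus $\{M\le|x|\le 2M\}$ and normalised. The DOA hypothesis yields a time $T_M>0$ with $\mathbb{P}_{\eta_M}[Y_{T_M}\in K_0\mid T_M<T_0]>\tfrac12$, which, combined with the preceding inequality, forces $\mathbb{P}_{\eta_M}[\tau_{[-R_0,R_0]}>T_M]\le\tfrac23$. On the other hand, non-entrance gives $\mathbb{P}_{\eta_M}[\tau_{[-R_0,R_0]}>T]\to 1$ as $M\to\infty$ for every fixed $T$, so that necessarily $T_M\to\infty$.

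The principal obstacle---and the crux of the proof---is to close the contradiction by ruling out that $T_M$ simply grows fast enough to save the DOA hypothesis. To this end I would use the eigenfunction structure that DOA forces. By the strong-Markov manipulation of the proof of Theorem \ref{do<->ent}, one has $\lim_{t\to\infty}\mathbb{P}_x[t+s<T_0]/\mathbb{P}_x[t<T_0]=e^{-\lambda s}$, where $\lambda>0$ is the killing rate of $\pi$; combined with $\pi P_t^0=e^{-\lambda t}\pi$, this shows that the density $\phi:=d\pi/d\mu$ is a positive pointwise eigenfunction $P_t^0\phi=e^{-\lambda t}\phi$ satisfying
$$\phi(x)=\lambda\int_{\mathbb{R}^0}G_X^0(x,y)\phi(y)\mu(dy).$$
The plan is to combine this integral identity with the estimate $G_X^0(x,y)\le\omega_\alpha(|x|^{\alpha-1}\wedge|y|^{\alpha-1})$ and the Q-process interpretation of DOA (forcing the $\phi$-Doob-transformed process to be positive recurrent, whence $\phi\in L^2(\mu)$) to run an $L^p$-bootstrap and conclude $\phi\in L^\infty$. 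Boundedness of $\phi$ together with the asymptotics $e^{\lambda t}\mathbb{P}_x[t<T_0]\to\phi(x)$ then yields $\sup_x\mathbb{E}_x[T_0]<\infty$, contradicting $I^{\sigma,\alpha}=+\infty$ through Remark \ref{equ_rem}. The hardest step will be this bootstrap: under $I^{\sigma,\alpha}=+\infty$ the Green operator $G^0$ is no longer bounded on $L^p(\mu)$ and the Riesz--Thorin argument of Theorem \ref{compact} breaks down, so one must leverage the DOA hypothesis carefully to gain the needed integrability on $\phi$ before upgrading to $L^\infty$.
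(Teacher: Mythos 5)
Your plan does not close. The decisive step---showing that the QSD density $\phi=\mathrm{d}\pi/\mathrm{d}\mu$ lies in $L^2(\mu)$ and then in $L^\infty$ via an ``$L^p$-bootstrap''---is precisely the point you flag as hardest and leave unproved, and it is not clear it can be carried out: under $I^{\sigma,\alpha}=\infty$ the operator $G^0$ is unbounded on $L^p(\mu)$, the killed semigroup need not be compact, and the assertion that the domain-of-attraction hypothesis forces the $\phi$-Doob transform to be positive recurrent with $\phi\in L^2(\mu)$ is nowhere justified. Even granting $\phi\in L^\infty$, the implication from the pointwise asymptotics $e^{\lambda t}\mathbb{P}_x[t<T_0]\to\phi(x)$ to $\sup_x\mathbb{E}_x[T_0]<\infty$ requires uniformity in $x$ of that convergence, which you do not have. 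A smaller but real issue at the start: the Fatou computation in the proof of Lemma \ref{equ_con} only yields $\sup_x\mathbb{E}_x[T_{[-R,R]}]=\infty$, not $\lim_{|x|\to\infty}\mathbb{P}_x[T_{[-R,R]}>t]=1$ for each fixed $t$; that stronger dichotomy needs a separate argument or a precise citation of \cite{DK20}.

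The paper's proof avoids all of this with a short direct argument that exploits the hypothesis that the QSD attracts \emph{every} probability measure. Since $\pi$ is a QSD, $\mathbb{P}_\pi[T_0>t]=e^{-\beta t}$ for some $\beta>0$ by \cite[Theorem 2.2]{CMS13}; the argument of \cite[Proposition 7.5]{CCLMMS09}, run under the attraction hypothesis, then gives $\mathbb{E}_\eta[e^{\lambda T_0}]<\infty$ for every $\lambda\in(0,\beta)$ and every initial law $\eta$. Setting $g(x)=\mathbb{E}_x[e^{\lambda T_0}]$, if $g$ were unbounded one could choose $x_n$ with $g(x_n)\geq 2^n$ and test the hypothesis against $\eta=\sum_{n}2^{-n}\delta_{x_n}$, for which $\mathbb{E}_\eta[e^{\lambda T_0}]=\infty$, a contradiction. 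Hence $\sup_x\mathbb{E}_x[e^{\lambda T_0}]<\infty$, so $\sup_x\mathbb{E}_x[T_0]<\infty$, which is equivalent to $I^{\sigma,\alpha}<\infty$ by Remark \ref{equ_rem}. The idea your proposal is missing is this diagonal choice of initial measure, which converts pointwise finiteness of exponential moments into uniform boundedness without any spectral or eigenfunction analysis.
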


\begin{proof}
	According to \cite[Theorem 2.2]{CMS13}, there exists a constant $\beta>0$, such that 
	\begin{equation}\label{exp distri}
		\mathbb{P}_\pi[T_0>t]=e^{-\beta t}, \ \forall \ t\geq 0. 
	\end{equation}
	 By using \eqref{exp distri}, and a similar argument to the proof of  \cite[Proposition 7.5]{CCLMMS09}, we show that for any $\lambda \in (0,\beta)$ and any probability measure $\eta$ on $\mathbb{R}^0$, $\mathbb{E}_\eta[e^{\lambda T_0}]<+\infty$.
For any $x\neq 0$, by taking $\eta=\delta_x$ (where $\delta_x$ is the Dirac measure), we obtain that  $\mathbb{E}_x[e^{\lambda T_0}]<+\infty$. Let $g(x)=\mathbb{E}_x[e^{\lambda T_0}]<+\infty$. 

We claim that $g$ is bounded. If not, there would exist sequences $\{x_n\}$ such that $g(x_n)\geq 2^n, \ \forall \ n \geq 1$. However, if we take $\eta=\sum_{n=1}^{+\infty} \frac{1}{2^n} \delta_{x_n}$, 
then it is easy to verify that $\mathbb{E}_\eta[e^{\lambda T_0}]=+\infty$,  which is a contradiction. \par
{ Therefore, we arrive at
	\begin{equation*}
		\lambda \sup_{x\in \mathbb{R}^0} \mathbb{E}_x[T_0]+1 \leq \sup_{x\in \mathbb{R}^0} \mathbb{E}_x[e^{\lambda T_0}] <+\infty.
	\end{equation*}
	Combining it with Remark \ref{equ_rem}, 
	 $I^{\sigma,\alpha}<\infty$.}\end{proof}

Next, we will obtain a corollary about the exponential moments of the hitting time $T_0$, and prove the first Dirichlet eigenvalue $\lambda_{0}$ equals to the  uniform decay rate
	\begin{equation*}
		\lambda_0':=\lim_{t\rightarrow +\infty}-\frac{1}{t} \log \sup_{x\in \mathbb{R}^0} \mathbb{P}_x[T_0>t].
\end{equation*}.
\begin{cor}
	If $I^{\sigma,\alpha}<\infty$, and for any $r_0>0$, $\sup_{x\in [-r_0,r_0]\setminus\{0\}} p_2^0(x,x) <+\infty$, then
	
(1) we have	\begin{equation}\label{exp_lam}
		\sup_{x\in \mathbb{R}^0} \mathbb{E}_x[e^{\lambda T_0}] <+\infty \ \text{if and only if}\ \lambda<\lambda_0,
	\end{equation}

(2) $\lambda_{0}=\lambda_{0}'.$ 
\end{cor}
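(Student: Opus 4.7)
The plan is to deduce both (1) and (2) from a pair of matching estimates:
\[
e^{-\lambda_0 t} \;=\; \mathbb{P}_\nu[T_0 > t] \;\leq\; \sup_{x \in \mathbb{R}^0} \mathbb{P}_x[T_0 > t] \;\leq\; C\, e^{-\lambda_0 t}
\]
for some constant $C$ and all $t \geq 2$. The lower bound comes from the identity $\mathbb{P}_\nu[T_0 > t] = e^{-\lambda_0 t}$, which I would verify using the $\mu$-symmetry of $P_t^0$, the eigenequation $P_t^0 \psi_0 = e^{-\lambda_0 t}\psi_0$, and the explicit form of $\nu$ in \eqref{nu}:
\[
\mathbb{P}_\nu[T_0 > t] = \frac{1}{\mu(\psi_0)} \int_{\mathbb{R}^0} \psi_0(x)\, P_t^0 \mathbf{1}(x)\, \mu(\d x) = \frac{1}{\mu(\psi_0)} \int_{\mathbb{R}^0} P_t^0 \psi_0(x)\, \mu(\d x) = e^{-\lambda_0 t}.
\]
The upper bound is essentially established already in the first half of the proof of Theorem \ref{do_con}: Lemma \ref{bou_lem} with $\lambda = \lambda_0$ gives $B_1 = \sup_x \mathbb{E}_x[e^{\lambda_0 T_{[-R_0, R_0]}}] < \infty$ for a suitable $R_0$, while the eigenfunction expansion estimate \eqref{lim_t3}, invoked only on the compact window $[-R_0, R_0] \setminus \{0\}$ (which is exactly what the local density hypothesis in this corollary permits), gives $B_2 = \sup_{t > 2}\sup_{x \in [-R_0, R_0]\setminus\{0\}} e^{\lambda_0 t} \mathbb{P}_x[T_0 > t] < \infty$. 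The strong Markov splitting at $T_{[-R_0, R_0]}$ then yields the bound with $C = B_1(B_2 + 1)$.

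For part (1), the layer-cake formula
\[
\mathbb{E}_x[e^{\lambda T_0}] = 1 + \lambda \int_0^{\infty} e^{\lambda t} \mathbb{P}_x[T_0 > t]\, \d t
\]
combined with the upper bound immediately gives $\sup_x \mathbb{E}_x[e^{\lambda T_0}] < \infty$ whenever $\lambda < \lambda_0$. For the converse I would argue by contradiction: if some $\lambda \geq \lambda_0$ satisfied $\sup_x \mathbb{E}_x[e^{\lambda T_0}] < \infty$, then integrating against $\nu$ would yield $\mathbb{E}_\nu[e^{\lambda T_0}] \leq \sup_x \mathbb{E}_x[e^{\lambda T_0}] < \infty$; but $\mathbb{P}_\nu[T_0 > t] = e^{-\lambda_0 t}$ forces $\mathbb{E}_\nu[e^{\lambda T_0}] = \infty$ for every $\lambda \geq \lambda_0$, a contradiction.

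Part (2) then falls out by applying $-\frac{1}{t}\log(\cdot)$ to the two-sided estimate and letting $t \to \infty$: the lower inequality gives $\limsup_t -\frac{1}{t}\log \sup_x \mathbb{P}_x[T_0 > t] \leq \lambda_0$, the upper inequality gives $\liminf_t -\frac{1}{t}\log \sup_x \mathbb{P}_x[T_0 > t] \geq \lambda_0$, so the defining limit exists and equals $\lambda_0$. I do not foresee any genuine obstacle; the heavy lifting is already packaged in Lemma \ref{bou_lem}, Corollary \ref{bou}, and the spectral estimate underlying \eqref{lim_t3}. The only subtlety is ensuring the strong-Markov decomposition uses the density bound only on the compact window $[-R_0, R_0]\setminus\{0\}$, which matches precisely the hypothesis of the corollary.
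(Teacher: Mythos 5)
Your proof is correct, and it takes a noticeably more direct route than the paper's. For the ``if'' half of (1), the paper argues: the hypotheses give Theorem \ref{do_con}, hence $\nu$ attracts all initial laws, hence (by the proof of Theorem \ref{do->ent}, which in turn leans on the argument of \cite[Proposition 7.5]{CCLMMS09} plus the diagonal-sequence boundedness trick) $\sup_x\mathbb{E}_x[e^{\lambda T_0}]<\infty$ for $\lambda<\lambda_0$. You instead extract the quantitative bound $\sup_x\mathbb{P}_x[T_0>t]\leq Ce^{-\lambda_0 t}$ directly from the strong-Markov splitting at $T_{[-R_0,R_0]}$ in the first half of the proof of Theorem \ref{do_con} (where, as you correctly note, only the local bound $\sup_{x\in[-R_0,R_0]\setminus\{0\}}p_2^0(x,x)<\infty$ is used, matching the corollary's hypothesis), and then integrate by the layer-cake formula. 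This bypasses the detour through the qualitative attraction statement and the external reference. For the ``only if'' half, the paper cites \cite[Claim 2.4]{LJ12}; your argument via $\mathbb{E}_\nu[e^{\lambda T_0}]\leq\sup_x\mathbb{E}_x[e^{\lambda T_0}]$ together with $T_0\sim\mathrm{Exp}(\lambda_0)$ under $\mathbb{P}_\nu$ is self-contained and just as valid. For (2), the paper obtains the $\limsup$ bound from the pointwise limit \eqref{lim_t2} and the $\liminf$ bound from part (1) plus Chebyshev; you read both off the two-sided estimate, which is marginally cleaner since the $\liminf$ does not pass through the exponential moment. In short: same key ingredients (Lemma \ref{bou_lem}, Corollary \ref{bou}, the spectral estimate \eqref{lim_t3}, and the identity $\mathbb{P}_\nu[T_0>t]=e^{-\lambda_0 t}$), but your packaging of them into a single two-sided bound $e^{-\lambda_0 t}\leq\sup_x\mathbb{P}_x[T_0>t]\leq Ce^{-\lambda_0 t}$ makes the corollary self-contained where the paper relies on two external citations.
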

\begin{proof}
   (1) { The ``only if'' implication follows from \cite[Claim 2.4]{LJ12} and the monotonicity of exponential function.} Hence we only prove ``if'' part. By the assumptions, Theorem \ref{do_con} holds. Note that 
   	$$\mathbb{P}_\nu[T_0>t]=\int_{\mathbb{R}^0}P_t^01\d\nu=\frac{\lan P_t^01,\psi_0\ran}{\mu(\psi_0)}=\e^{-\lambda_{0} t},$$
   	 so by proof of Theorem \ref{do->ent}, we know for any $\lambda<\lambda_0$, $\sup_x\mathbb{E}_x[e^{\lambda T_0}]<+\infty$.

   (2) Using \eqref{lim_t2}, for any $x\neq 0$,
	\begin{equation*}
		\lambda_0=\lim_{t \rightarrow \infty} -\frac{1}{t} \log\mathbb{P}_x[T_0>t].
	\end{equation*}
	It follows that 
	\begin{equation*}
		\limsup_{t\rightarrow +\infty} -\frac{1}{t} \log \sup_{x\in \mathbb{R}^0} \mathbb{P}_x[T_0>t]\leq \lim_{t \rightarrow \infty} -\frac{1}{t} \log\mathbb{P}_x[T_0>t]=\lambda_0.
	\end{equation*}
For any $\lambda<\lambda_0$, using \eqref{exp_lam} and Chebyshev inequality,
	\begin{equation*}
		\sup_{x\in \mathbb{R}^0} \mathbb{P}_x[T_0>t] \leq e^{-\lambda t} \sup_{x\in \mathbb{R}^0} \mathbb{E}_x[e^{\lambda T_0}],
	\end{equation*}
which yields that
	\begin{equation*}
		\liminf_{t\rightarrow +\infty} -\frac{1}{t} \log \sup_{x\in \mathbb{R}^0} \mathbb{P}_x[T_0>t] \geq \lambda,
	\end{equation*}
	which completes the proof.
\end{proof}
It should be mentioned here that for some classical cases, such as Example \ref{eg}, the exponential convergence to QSD \eqref{con_eta} is equivalent to the {entrance from infinity}. Now we prove Example \ref{eg} as follows.

\begin{proof}[Proof of Example \ref{eg}]
	According to \cite[Corollary 6]{W21+}, $I^{\sigma, \alpha}<+\infty$ holds if and only if $\gamma>1$. Therefore, when $\gamma>1$, by Theorem \ref{do<->ent} , $Y^0$ has unique QSD $\nu$ given by \eqref{nu},  $\nu$ is the Yaglom limit of $Y^0$ and \eqref{Yaglom-exp} holds. 
	
	Furthermore, by \cite[(3.11)]{WZ21}, when $\gamma>1$, for the transition density $p_t(x,y)$ with respect to $\mu$, by choosing $t=1$, we have there exists a constant $C_2>0$ such that
	$$p_1(x,y)\leq C_2,\quad x,y\in\mathbb{R}.$$
	Note that $$p_2(x,x)=\int_{\mathbb{R}}p_1(x,y)^2\mu(\d y)\leq C_2^2, $$	thus we know that
	\begin{equation*}
	\sup_x p_2^0(x,x) \leq \sup_x p_2(x,x) <+\infty.
	\end{equation*}	
	Therefore, \eqref{con_eta} holds in this case. Noting that the exponential convergence \eqref{con_eta} implies that \eqref{con_ordin}, so by Theorem \ref{do->ent}, \eqref{con_eta} means that $\gamma>1$. Thus the exponential convergence to QSD \eqref{con_eta} is equivalent to the {entrance from infinity} in this example. 
\end{proof}

Finally, let's discuss the quasi-ergodic distribution for $Y^0$, which is a related topic.
\begin{defn}\label{def_QED}
	Let $\rho$ be a probability measure on $\mathbb{R}^0$. We say $\rho$ is a quasi-ergodic distribution(QED) for $Y^0$, if for any $x\neq 0$ and any $A\in \mathscr{B}(\mathbb{R}^0)$,
	\begin{equation*}
		\lim_{t \rightarrow \infty} \mathbb{E}_{ x} \left(  \frac{1}{t} \int_{0}^{t} \mathbf{1}_A(X_s) \d s| T_0>t    \right) =\rho(A). 
	\end{equation*}  
\end{defn}

	Using  \eqref{lim_t1}, \eqref{lim_t2} and Corollary \ref{bou}, it is easy to prove the following statement in the same way as \cite[Theorem 3.1]{HYZ19}.\par 
\begin{cor}
	Assume $I^{\sigma,\alpha}<\infty$, then for any bounded and measurable functions $f, g$ on  $\mathbb{R}^0$, $x \neq 0$  and $ 0<p<q<1$,
	\begin{enumerate}[label=(\arabic*)]
		\item $		\lim_{t \rightarrow \infty} \mathbb{E}_x[f(X_{pt}) g(X_t) | T_0>t ] =\int_{\mathbb{R}} f(y) m(\d y) \int_{\mathbb{R}} g(y) \nu(\d y)$,
		\item $		\lim_{t \rightarrow \infty} \mathbb{E}_x[f(X_{pt}) g(X_{qt}) | T_0>t ] =\int_{\mathbb{R}} f(y) m(\d y) \int_{\mathbb{R}} g(y) m(\d y)$, 
		\item $		\lim_{t \rightarrow \infty} \mathbb{E}_{ x} \left(  \frac{1}{t} \int_{0}^{t} f(X_s) \d s| T_0>t    \right) = \int_{\mathbb{R}} f(y) m(\d y)$, 
	\end{enumerate}
	where \begin{equation*}
		m(\d y)=\psi_0(y)^2 \mu(\d y) \text{ and } \nu \text{ is the QSD for } Y^0.
	\end{equation*}
\end{cor}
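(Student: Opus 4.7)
The plan is to follow the argument of \cite[Theorem 3.1]{HYZ19} but expressed in terms of the spectral machinery developed for $Y^0$. The core input is the $L^2$-convergence $e^{\lambda_0 s}P_s^0 h \to \psi_0\langle\psi_0,h\rangle$ for $h\in L^2(\mathbb{R}^0;\mu)$, supplied by the contraction identity $\|P_s^0\|_{L^2\to L^2}=e^{-\lambda_0 s}$, together with the pointwise representation
\begin{equation*}
e^{\lambda_0 s}P_s^0 h(x) = e^{\lambda_0}\langle e^{\lambda_0(s-1)}P_{s-1}^0 h,\,p_1^0(x,\cdot)\rangle,
\end{equation*}
which upgrades the $L^2$-convergence to pointwise convergence at any fixed $x\neq 0$, since $p_1^0(x,\cdot)\in L^2(\mathbb{R}^0;\mu)$ (because $p_2^0(x,x)=\|p_1^0(x,\cdot)\|_{L^2(\mu)}^2<+\infty$, as already used in the proof of Theorem \ref{do<->ent}).

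For (1), the Markov property at time $pt$ gives
\begin{equation*}
e^{\lambda_0 t}\mathbb{E}_x[f(Y_{pt})g(Y_t)\mathbf{1}_{\{t<T_0\}}] = e^{\lambda_0 pt}P_{pt}^0\bigl(f\cdot e^{\lambda_0(1-p)t}P_{(1-p)t}^0 g\bigr)(x).
\end{equation*}
Setting $h_t := f\cdot e^{\lambda_0(1-p)t}P_{(1-p)t}^0 g$ and $h := f\psi_0\langle\psi_0,g\rangle$, boundedness of $f$ together with the $L^2$-convergence of the inner factor yields $h_t\to h$ in $L^2(\mu)$. Splitting $P_{pt}^0 h_t = P_{pt}^0(h_t-h)+P_{pt}^0 h$, the first piece has $L^2$-norm at most $e^{-\lambda_0 pt}\|h_t-h\|_{L^2(\mu)}$, so pairing with $p_1^0(x,\cdot)$ via Cauchy--Schwarz yields $o(1)$ at $x$ after multiplication by $e^{\lambda_0 pt}$; the second piece converges pointwise at $x$ to $\psi_0(x)\langle\psi_0,h\rangle=\psi_0(x)\langle\psi_0,f\psi_0\rangle\langle\psi_0,g\rangle$ by the same displayed identity. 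Dividing by $e^{\lambda_0 t}\mathbb{P}_x[t<T_0]\to\psi_0(x)\mu(\psi_0)$ from \eqref{lim_t2} delivers $\int f\,dm\cdot\int g\,d\nu$.

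For (2), I apply the Markov property in succession at times $pt$ and $qt$, producing an innermost kernel $k_t(z):=P_{(1-q)t}^0 1(z)$ with $e^{\lambda_0(1-q)t}k_t \to \psi_0\mu(\psi_0)$ in $L^2(\mu)$. Re-bracketing the exponential factors as $e^{\lambda_0 t} = e^{\lambda_0 pt}\cdot e^{\lambda_0(q-p)t}\cdot e^{\lambda_0(1-q)t}$ and iterating the estimate of (1) twice, the limit of the numerator equals $\psi_0(x)\langle\psi_0,f\psi_0\rangle\langle\psi_0,g\psi_0\rangle\mu(\psi_0)$; after dividing by $D_t\to\psi_0(x)\mu(\psi_0)$, the $\mu(\psi_0)$ cancels and the extra factor $\psi_0$ inside the second inner product --- produced by the innermost $k_t$ rather than the bare $g$ of (1) --- is precisely what turns $\int g\,d\nu$ into $\int g\,dm$. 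Part (3) is immediate from (1) with $g\equiv 1$: by Fubini and the substitution $s=ut$,
\begin{equation*}
\mathbb{E}_x\!\left[\frac{1}{t}\int_0^t f(Y_s)\,ds\,\Big|\,T_0>t\right] = \int_0^1 \mathbb{E}_x[f(Y_{ut})\mid T_0>t]\,du,
\end{equation*}
and pointwise convergence of the integrand to $\int f\,dm$ for each $u\in(0,1)$, combined with the uniform bound $\|f\|_\infty$, yields the conclusion by dominated convergence.

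The main obstacle is the careful bookkeeping of the nested $L^2$-convergences through the compositions of $P_{pt}^0$ and $P_{(q-p)t}^0$; this is only manageable because the contraction bound $\|e^{\lambda_0 s}P_s^0\|_{L^2\to L^2}\le 1$ ensures that absorbing an $L^2$-error through an outer $e^{\lambda_0 s}P_s^0$ incurs no loss. Without this sharp operator identity (available here thanks to the self-adjointness of $P_s^0$ and the spectral gap $\lambda_1>\lambda_0$), the limits would only hold for $p,q$ in a proper subinterval of $(0,1)$. The boundedness of $\psi_0$ from Corollary \ref{bou} is used in identifying the limits, since it ensures that $f\psi_0,g\psi_0\in L^\infty(\mu)\subset L^2(\mu)$ whenever $f,g$ are merely bounded.
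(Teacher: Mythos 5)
Your proof is correct and follows essentially the same route the paper takes, namely the spectral-decomposition argument of \cite[Theorem 3.1]{HYZ19} built on \eqref{lim_t1}, \eqref{lim_t2} and the Markov property at the intermediate times $pt$ and $qt$; the paper simply defers these details to the cited reference. The only point worth flagging is that your pointwise upgrade relies on $p_2^0(x,x)=\|p_1^0(x,\cdot)\|_{L^2(\mu)}^2<\infty$ for each fixed $x$, but this is exactly the same implicit assumption the paper makes in \eqref{lim_t3}, so you are consistent with it.
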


\section*{Appendix}
\noindent {\bf Proposition A.1}
	If $\alpha\in (1,2)$, then for all $x\neq 0$,  $\mathbb{P}_x[T_0<T_\infty]=1$.

\begin{proof}
	According to \cite[Chapter V]{BG68} and Blumenthal-Getoor-McKean theorem, it is easy to prove the following formula:
	\begin{equation}\label{non_exp}
		\mathbb{P}_x[T_0<T_{[R,+\infty)} \wedge T_{(-\infty, -R]}]=\frac{G_X^{(-R,R)}(x,0)}{G_X^{(-R,R)}(0,0)}.
	\end{equation}
	Using \cite[Theorem 2.2.3]{KA18}, it follows that
	\begin{equation*}
	G_X^{(-R,R)}(x,0)=\frac{2^{1-\alpha}|x|^{\alpha-1}}{\Gamma(\frac{\alpha}{2})^2}\int_{1}^{\frac{R}{|x|}} (s+1)^{\alpha/2-1}(s-1)^{\alpha/2-1} ds,
	\end{equation*}
	\begin{equation*}
		G_X^{(-R,R)}(0,0)=\frac{2^{1-\alpha}R^{\alpha-1}}{\Gamma(\frac{\alpha}{2})^2(\alpha-1)}.
	\end{equation*}
	It comes to the conclusion by letting $R\rightarrow +\infty$ and using L'H\^ospital's rule in \eqref{non_exp}.
\end{proof}

{ \noindent {\bf Proposition A.2} If $I^{\sigma,\alpha}<\infty$, then $\lambda_0>0$. Furthermore, $\lambda_0$ is simple.}
\begin{proof}
If $I^{\sigma,\alpha}<\infty$, then according to \cite[Theorem 1.4]{W21+}, the process $Y$ is strongly ergodic, thus $Y$ is exponentially ergodic. Therefore, by \cite[Theorem 1.1]{W21+}, $$\delta:=\sup_{x} |x|^{\alpha-1}\int_{\mathbb{R}\setminus (-|x|,|x|)}\sigma(y)^{-\alpha}\d y<\infty.$$ So by \cite[Theorem 1.3]{W21+},
$\lambda_{0}\geqslant
(4\omega_{\alpha}\delta)^{-1}>0.$

According to \cite[Theorem 6.6]{SH74} and spectral representation theorem, the spectral radius of $G^0$ is a simple eigenvalue and equals to $\lambda_0^{-1}$,  which implies $\lambda_0$ is a simple eigenvalue.	

\end{proof}


{\bf Acknowledgements}\
This work was supported by the National Nature Science Foundation of China (Grant No. 12171038), National Key Research and Development Program of China (2020YFA0712901).


\bibliographystyle{plain}
\bibliography{stable}

\begin{thebibliography}{10}

\bibitem{BG68}
R.~M. Blumenthal and R.~K. Getoor.
\newblock {\em Markov processes and potential theory}.
\newblock Pure and Applied Mathematics, Vol. 29 Academic Press, 1968.

\bibitem{bz06}
K.~Bogdan and T.~Zak.
\newblock On {K}elvin transformation.
\newblock {\em J. Theoret. Probab.}, 19(1):89--120, 2006.

\bibitem{CCLMMS09}
P.~Cattiaux, P.~Collet, A.~Lambert, Servet Mart\'inez, S.~M\'el\'eard, and
  J.~San~Mart\'in.
\newblock Quasi-stationary distributions and diffusion models in population
  dynamics.
\newblock {\em Ann. Probab.}, 37(5):1926--1929, 2009.

\bibitem{CXW14}
X.~Chen and J.~Wang.
\newblock Intrinsic ultracontractivity for general lévy processes on bounded
  open sets.
\newblock {\em Illinois J. Math.}, 58:1117--1144, 2014.

\bibitem{CW14}
Z.-Q. Chen and J.~Wang.
\newblock Ergodicity for time-changed symmetric stable processes.
\newblock {\em Stoch. Proc. Appl.}, 124(9):2799--2823, 2014.

\bibitem{CMS13}
P.~Collet, S.~Mart\'inez, and J.~San~Martin.
\newblock {\em Quasi-stationary distributions. Markov chains, diffusions and
  dynamical systems}.
\newblock Probab. Appl. (N.Y.), Springer, Heidelberg, 2013.

\bibitem{DK20}
L.~Doring and A.~E. Kyprianou.
\newblock Entrance and exit at infinity for stable jump diffusions.
\newblock {\em Ann. Probab.}, 48(3):1220--1265, 2020.

\bibitem{DKW20}
L.~Doring, A.~E. Kyprianou, and P.~Weissmann.
\newblock Stable processes conditioned to avoid an interval.
\newblock {\em Stoch. Proc. Appl.}, 130:471--487, 2020.

\bibitem{FOT11}
M.~Fukushima, Y.~Oshima, and M.~Takeda.
\newblock {\em Dirichlet Forms and Symmetric Markov Processes, extended ed.}
\newblock De Gruyter Studies in Mathematics 19. de Gruyter, Berlin., 2011.

\bibitem{GM15}
W.-J. Gao and Y.-H. Mao.
\newblock Quasi-stationary distribution for the birth–death process with exit
  boundary.
\newblock {\em J. Math. Anal. Appl.}, 427:114--125, 2015.

\bibitem{GMZ22}
W.-J. Gao, Y.-H. Mao, and C.~Zhang.
\newblock The birth-death processes with regular boundary: stationarity and
  quasi-stationarity.
\newblock {\em Acta Math. Sin. (Engl. Ser.)}, 38(5).

\bibitem{GNW23}
A.~Guillin, B.~Nectoux, and L.~Wu.
\newblock Quasi-stationary distribution for strongly {F}eller {M}arkov
  processes by {L}yapunov functions and applications to hypoelliptic
  {H}amiltonian systems.
\newblock {\em Preprint hal-03068461}, 2023.

\bibitem{HYZ19}
G.~He, G.~Yang, and Y.~Zhu.
\newblock Some conditional limiting theorems for symmetric {M}arkov processes
  with tightness property.
\newblock {\em Electron. Commun. Probab.}, 24(60):1--11, 2019.

\bibitem{KA18}
A.~E. Kyprianou.
\newblock Stable processes, self-similarity and the unit ball.
\newblock {\em ALEA Lat. Am. J. Probab. Math. Stat.}, 15(1):617--690, 2018.

\bibitem{LJ12}
J.~Littin~C.
\newblock Uniqueness of quasistationary distributions and discrete spectra when
  $\infty$ is an entrance boundary and 0 is singular.
\newblock {\em J. Appl. Probab.}, 49(3):719--730, 2012.

\bibitem{MV12}
S.~M\'el\'eard and D.~Villemonais.
\newblock Quasi-stationary distributions and population processes.
\newblock {\em Probab. Surv.}, 9:340--410, 2012.

\bibitem{O13}
Y.~Oshima.
\newblock {\em {Semi-Dirichlet forms and Markov processes}}.
\newblock Berlin: De Gruyter Studies in Mathematics., 2013.

\bibitem{Sa99}
K.~Sato.
\newblock {\em L\'evy processes and infinitely divisible distributions.
  Cambridge Studies in Advanced Mathematics {\bf 68}.}
\newblock Cambridge: Cambridge Univ. Press., 1999.

\bibitem{SH74}
H.~H. Schaefer.
\newblock {\em Banach Lattices and Positive Operators}.
\newblock Die Grundlehren der mathematischen Wissenschaften, vol. 215,
  Springer, 1974.

\bibitem{T18}
M.~Takeda.
\newblock Compactness of symmetric {M}arkov semi-groups and boundedness of
  eigenfunctions.
\newblock {\em Trans. Amer. Math. Soc.}, 372(6):3905--3920, 2019.

\bibitem{T19}
M.~Takeda.
\newblock Existence and uniqueness of quasi-stationary distributions for
  symmetric markov processes with tightness property.
\newblock {\em J. Theoret. Probab.}, 32:2006--2019, 2019.

\bibitem{V19}
A.~Velleret.
\newblock Exponential quasi-ergodicity for processes with discontinuous
  trajectories.
\newblock {\em arXiv.1902.01441}, 2019.

\bibitem{wfy05}
F.-Y. Wang.
\newblock {\em Functional Inequalities, Markov Semigroups, and Spectral
  Theory}.
\newblock Beijing: Science press, 2005.

\bibitem{WZ21}
J.~Wang and L.T. Zhang.
\newblock Functional inequalities for time-changed symmetric $\alpha$-stable
  processes.
\newblock {\em Front. Math. China}, 16(2):595--622, 2021.

\bibitem{W21+}
T.~Wang.
\newblock Exponential and strong ergodicity for one-dimensional time-changed
  symmetric stable processes.
\newblock {\em Bernoulli}, 29(1):580--596, 2023.

\bibitem{ZH16}
H.~Zhang and G.~He.
\newblock Domain of attraction of quasi-stationary distribution for
  one-dimensional diffusions.
\newblock {\em Front. Math. China}, 11:411--421, 2016.

\end{thebibliography}

\end{document}